\newtheorem*{rep@theorem}{\rep@title}
\newcommand{\newreptheorem}[2]{%
\newenvironment{rep#1}[1]{%
 \def\rep@title{#2 \ref{##1}}%
 \begin{rep@theorem}}%
 {\end{rep@theorem}}}
\newtheorem{thm}{Theorem}[section]
\newtheorem{prop}[thm]{Proposition} 
\newtheorem{lem}[thm]{Lemma}
\newtheorem{cor}[thm]{Corollary}
\theoremstyle{definition}
\newtheorem{dfn}[thm]{Definition}
\newtheorem{exmpl}[thm]{Example}
\newtheorem{?}[thm]{Question}
\theoremstyle{remark}
\newtheorem{rmk}[thm]{Remark}
\newcommand{\ds}{\displaystyle}
\newcommand{\tql}{\textquotedblleft}
\newcommand{\tqr}{\textquotedblright}
\newcommand{\noin}{\noindent}
\newcommand{\mc}{\mathcal}
\newcommand{\mb}{\mathbb}
\newcommand{\mbf}{\mathbf}
\newcommand{\peq}{\preceq}
\newcommand{\nci}{\Shortstack{. . . .}} 
\newcommand{\bb}{$\bullet\bullet$}
\newcommand{\car}{\curvearrowright}
\begin{document}
\title[Graph products of multipliers]{On graph products of multipliers and the Haagerup property for $C^*$-dynamical systems}

\author{Scott Atkinson}
\thanks{The author received partial support from NSF Grant \# DMS-1362138.}

\address{Vanderbilt University, Nashville, TN, USA}
\email{scott.a.atkinson@vanderbilt.edu}

\begin{abstract}

We consider the notion of the graph product of actions of groups $\left\{G_v\right\}$ on a $C^*$-algebra $\mc{A}$ and show that under suitable commutativity conditions the graph product action $\bigstar_\Gamma \alpha_v: \bigstar_\Gamma G_v \car \mc{A}$  has the Haagerup property if each action $\alpha_v: G_v \curvearrowright \mc{A}$ possesses the Haagerup property.  This generalizes the known results on graph products of groups with the Haagerup property.  To accomplish this, we introduce the graph product of multipliers associated to the actions and show that the graph product of positive definite multipliers is positive definite.  
These results have impacts on left transformation groupoids and give an alternative proof of a known result for coarse embeddability. We also record a cohomological characterization of the Haagerup property for group actions.

%

\end{abstract}
\maketitle

\section{Introduction}

The Haagerup property is an important approximation property for groups and has been the subject of intense study since its appearance in Haagerup's article \cite{haagmap}. The Haagerup property was first imported into operator algebras by Choda in \cite{chodahp} for the setting of II$_1$-factors. Dong introduced the Haagerup property for $C^*$-algebras much later in \cite{dong}. More recently, Dong-Ruan introduced the Haagerup property in the context of Hilbert $C^*$-modules in \cite{donrua}. In the same article, Dong-Ruan defined the Haagerup property for the action of a discrete group $G$ on a unital $C^*$-algebra $\mc{A}$. Since the trivial action of a group has the Haagerup property if and only if the group has the Haagerup property, this treatment for group actions generalizes the classical notion of the Haagerup property for groups--see \cite{ccjjv} for a survey on the group setting.  B\'edos-Conti further considered the group action context in \cite{bedcon}. The definition of the Haagerup property for $C^*$-dynamical systems involves the notion of positive definite multipliers for the group action: $\mc{Z}(\mc{A})$-valued maps on $G$ that satisfy a positivity condition involving the group action--see Definition \ref{pdmul}.  Such multipliers were first introduced by Anantharaman-Delaroche in \cite{ad} in consideration of amenable group actions.

Graph products unify the notions of free products and direct/tensor products.   In particular, given a simplicial graph $\Gamma = (V,E)$ assign an object (e.g. group, ring, algebra, etc.) to each vertex. If there is an edge between two vertices then the two corresponding objects commute with each other in the graph product; if there is no edge between two vertices then the two corresponding objects have no relations with each other within the graph product.  Such products initially appeared in the group theory context, and the most well-known examples are right-angled Artin groups (graph products of $\mb{Z}$) and right-angled Coxeter groups (graph products of $\mb{Z}/2\mb{Z}$).  See the following (woefully incomplete) list of references. \cite{baudisch, chiswell, droms1, droms2, droms3, green, charney, wise}. 

Whenever a certain property is preserved under taking both free and direct/tensor products, it is of interest to ask if that property is also preserved under taking graph products.  In many cases, the answer is affirmative--see \cite{valette, antdre, casfim, reck, gpucp}. The purpose of this article is to consider graph products of group actions and graph products of multipliers associated with those actions and to show that the Haagerup property for group actions is stable under graph products.  The notion of the Haagerup property for group actions is relatively new, and at the writing of this article there is neither a free nor a direct product version of this result.  Thus, the main results of this paper are instances
of the generality of graph products in simultaneously establishing the results for free and direct products of group actions with the Haagerup property.

Readers familiar with Dong-Ruan's article \cite{donrua}, graph products, or both will quickly observe the potential for alternatives to or generalizations of  the combinatorial proof strategy (cf. \S\ref{appb}) of this paper.  Such readers are directed to \S\ref{ob} for a discussion on the obstructions to such approaches.

The paper is organized as follows. In \S\ref{gacp}, we recall the construction of the reduced crossed product $C^*$-algebra associated to the action of a group on a $C^*$-algebra and give the relevant background on Dong-Ruan's definition for the Haagerup property for group actions.  In \S\ref{cohochar} we provide a characterization of the Haagerup property for group actions in terms of 1-cocycles. In \S\ref{gp} we consider graph products of group actions and their corresponding multipliers.  The graph product of group actions is natural enough to define, but to define the graph product of the corresponding multipliers care must be taken to ensure the appropriate commuting relations are satisfied--see Definitions \ref{multcom} and \ref{gppdm}.    In \S\ref{mr}, we give the main results of the paper. The graph product of appropriately commuting positive definite multipliers is again positive definite (Theorem \ref{posdef}). We also establish the analogous result for graph products of positive definite functions on left transformation groupoids. Then we apply Theorem \ref{posdef}  to prove that the graph product of actions possessing the Haagerup property (and whose multipliers suitably commute) has the Haagerup property (Theorem \ref{hp}).  This immediately implies that coarse embeddability of discrete groups into a Hilbert space is stable under graph products (a result originally appearing in \cite{dadgue} for general amalgamated free products).  In \S\ref{ob}, we discuss obstructions to more general or alternative approaches.  \S\ref{appb} is devoted to the proof of Theorem \ref{posdef}. To do this, we establish several results for the kernel of a graph product of multipliers.  These results utilize combinatorics for graph products originally studied by the author in \cite{gpucp}, and thus the arguments  are adapted versions of the arguments in \cite{gpucp}, initially inspired by those in \cite{boca}.

Unless otherwise indicated, all groups in this article are discrete, and all $C^*$-algebras are unital.

\section{The Haagerup property for group actions}\label{gacp} In this section, we first recall some fundamental constructions and facts regarding crossed product $C^*$-algebras; then we will review the Haagerup property for group actions as discussed in \cite{donrua}.  Let $G$ be a group, and let $\mc{A}$ be a unital $C^*$-algebra.  A \emph{group action} $\alpha: G \car \mc{A}$ is a group homomorphism from $G$ into the automorphism group of $\mc{A}$.  Given an action $\alpha: G \car \mc{A}$, we can form the \emph{reduced crossed product $C^*$-algebra} $G \rtimes_{\alpha,r} \mc{A}$ as follows.  Let $\pi: \mc{A} \rightarrow B(\mc{H})$ be a faithful representation of $\mc{A}$, and let $\lambda: G \rightarrow B(\ell_2(G))$ denote the left-regular representation.  We can extend $\pi$ and $\lambda$ to representations $\tilde{\pi}$ and $\tilde{\lambda}$ (respectively) on the Hilbert space $\ell_2\otimes B(\mc{H})$ by putting 
\begin{align*}
\tilde{\lambda} & := \lambda \otimes 1_\mc{H}\\
&\text{and} \\
\tilde{\pi}(a)(\delta_s \otimes \xi) &:= \delta_s \otimes \pi(\alpha_s(a))\xi.
\end{align*}
This gives us a covariant representation $(\tilde{\pi},\tilde{\lambda})$ of the $C^*$-dynamical system $(\mc{A}, G, \alpha)$ on $\ell_2(G) \otimes \mc{H}$; that is, \[\tilde{\pi}(\alpha_s(a)) = \tilde{\lambda}_s\tilde{\pi}(a)\tilde{\lambda}_s^*.\]  When there is no risk of confusion, we will suppress the $\tilde{\pi}$ notation and will write $\lambda_s$ for $\tilde{\lambda}_s$. 

\begin{dfn} 
The \emph{reduced crossed product $C^*$-algebra} $G \rtimes_{\alpha,r} \mc{A}$ is given by the norm-closure of \[C_c(G, \mc{A}):= \left\{\sum_{s \in G} \lambda_s a_s: \text{ finitely many non-zero terms }\right\}\] in $B(\ell_2\otimes \mc{H})$.  
\end{dfn}
\noin The $C^*$-algebra $\mc{A}$ naturally sits inside $G \rtimes_{\alpha,r} \mc{A}$ as a unital subalgebra (elements with all non-identity terms equal to zero), and there exists a faithful conditional expectation onto this copy of $\mc{A}$ given by \[\sum_{s \in G} \lambda_s a_s \mapsto a_e.\]

Group actions are often accompanied by multipliers. A positive definite multiplier is defined as follows.

\begin{dfn}[\cite{ad, donrua}]\label{pdmul}
A map $h: G \rightarrow \mc{Z}(\mc{A})$ is a \emph{positive definite multiplier} if for every $n \in \mb{N}$ and $x_1,\dots, x_n \in G$ we have that the matrix $[\alpha_{x_j}(h_{x_i^{-1}x_j})]_{ij}$ is positive in $\mb{M}_n(\mc{A})$.  Such a map is called \emph{unital} if $h_e = 1_\mc{A}$.
\end{dfn}

\begin{rmk}\label{convention}
When comparing the definitions of the above concept in \cite{ad} and \cite{donrua}, one will notice a discrepancy.  The former demands the positivity of $[\alpha_{x_i}(h_{x_i^{-1}x_j})]_{ij}$, and the latter requires the positivity of $[\alpha_{x_j}(h_{x_i^{-1}x_j})]_{ij}$. Be assured that this is only a difference in convention.  Indeed, given a positive definite multiplier $h$ of one type, one obtains a positive definite multiplier $\tilde{h}$ of the other type by considering $\tilde{h}_s = h_{s^{-1}}^*$.  Unless otherwise indicated, we will follow the convention of Definition \ref{pdmul}.
\end{rmk}

\begin{prop}
Let $\alpha: G \car \mc{A}$ be a group action. If $h: G \rightarrow \mc{Z}(\mc{A})$ is a positive definite multiplier with respect to $\alpha$, then $h_{a^{-1}} ^*= \alpha_a(h_a)$ for every $a \in G$.
\end{prop}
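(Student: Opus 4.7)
The plan is to extract the identity from the $n=2$ case of Definition \ref{pdmul}, using only the fact that any positive matrix in $\mb{M}_n(\mc{A})$ must be self-adjoint.

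Concretely, I would fix $a \in G$ and apply Definition \ref{pdmul} with $n=2$, $x_1 = e$, $x_2 = a$. The matrix
\[
\bigl[\alpha_{x_j}(h_{x_i^{-1}x_j})\bigr]_{i,j=1,2} =
\begin{pmatrix} \alpha_e(h_e) & \alpha_a(h_a) \\ \alpha_e(h_{a^{-1}}) & \alpha_a(h_e) \end{pmatrix}
= \begin{pmatrix} h_e & \alpha_a(h_a) \\ h_{a^{-1}} & \alpha_a(h_e) \end{pmatrix}
\]
is positive in $\mb{M}_2(\mc{A})$ by hypothesis, hence self-adjoint. Comparing the $(1,2)$ and $(2,1)$ entries then yields $\alpha_a(h_a) = h_{a^{-1}}^*$, which is the desired identity.

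The proof is essentially a one-liner, so there is no serious obstacle; the only thing to be mindful of is keeping the convention straight. The indexing in Definition \ref{pdmul} is the ``$x_j$'' convention, so one must be careful to place $x_1 = e$ and $x_2 = a$ (rather than the reverse) so that the off-diagonal entries have the form $\alpha_a(h_a)$ and $h_{a^{-1}}$ rather than the $\alpha$-twisted versions of both. Remark \ref{convention} shows that the analogous identity under the other convention would read differently, so it is worth noting that the stated formula is indeed the one produced by Definition \ref{pdmul} and not by the Anantharaman-Delaroche convention.
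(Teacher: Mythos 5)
Your proof is correct: the $n=2$ positivity with $x_1=e$, $x_2=a$ forces the matrix to be self-adjoint, and comparing off-diagonal entries gives exactly $h_{a^{-1}} = \alpha_a(h_a)^*$. The paper states this proposition without proof, and your one-line argument is precisely the standard one it implicitly relies on, with the convention handled correctly.
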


\begin{dfn}
A function $h: G \rightarrow \mc{A}$ \emph{vanishes at infinity} if for any $\varepsilon > 0$, there is a finite subset $F \subset G$ so that $||h_s||< \varepsilon$ for every $s \in G \setminus F$.  The set of all such functions will be denoted $C_0(G, \mc{A})$.
\end{dfn}

\begin{dfn}[\cite{donrua}]
A group action $\alpha: G \car \mc{A}$ has the \emph{Haagerup property} if there exists a sequence of positive definite multipliers $\left\{h_n\right\}$ in $C_0(G, \mc{A})$ such that for every $s \in G, h_{n,s} \rightarrow 1$ as $n \rightarrow \infty$.  That is, $h_n\rightarrow 1_\mc{A}$ pointwise.
\end{dfn}

\begin{prop}\label{unital}
If $\alpha: G \car \mc{A}$ has the Haagerup property, then there exists a sequence of positive definite multipliers  in $C_0(G, \mc{A})$ converging to $1_\mc{A}$ pointwise satisfying the following properties.
\begin{enumerate}
\item $h$ is unital;
 \item $\ds ||h_s|| \leq \frac{1}{2}$ for every $s \in G\setminus \left\{e\right\}$.
\end{enumerate}
\end{prop}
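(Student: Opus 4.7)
My approach has two stages: first I would renormalize each multiplier in the given sequence to be unital, then make a secondary adjustment to secure the norm bound in~(2). Starting from the sequence $\{h_n\}\subset C_0(G,\mathcal{A})$ of positive definite multipliers with $h_{n,s}\to 1_\mathcal{A}$ pointwise, the substantive step is the unitalization.

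For the unitalization: applying Definition~\ref{pdmul} with $n=1,\,x_1=e$ forces $h_{n,e}\in\mathcal{Z}(\mathcal{A})_{+}$; together with $h_{n,e}\to 1_\mathcal{A}$ this permits me to pass to a tail on which $h_{n,e}$ is invertible in $\mathcal{Z}(\mathcal{A})$ with positive square root. Define
\[
\tilde h_{n,s}:=\alpha_{s^{-1}}\!\bigl(h_{n,e}^{-1/2}\bigr)\,h_{n,s}\,h_{n,e}^{-1/2}.
\]
Then $\tilde h_{n,e}=1_\mathcal{A}$, and $\tilde h_{n,s}\in\mathcal{Z}(\mathcal{A})$ because $\alpha$ preserves the center. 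Positive definiteness reduces to the identity
\[
\alpha_{x_j}\bigl(\tilde h_{n,\,x_i^{-1}x_j}\bigr)=\alpha_{x_i}\bigl(h_{n,e}^{-1/2}\bigr)\,\alpha_{x_j}\bigl(h_{n,\,x_i^{-1}x_j}\bigr)\,\alpha_{x_j}\bigl(h_{n,e}^{-1/2}\bigr),
\]
using $\alpha_{x_j}\circ\alpha_{x_j^{-1}x_i}=\alpha_{x_i}$; this exhibits $\bigl[\alpha_{x_j}(\tilde h_{n,\,x_i^{-1}x_j})\bigr]_{ij}$ as the congruence $D^{*}\bigl[\alpha_{x_j}(h_{n,\,x_i^{-1}x_j})\bigr]_{ij}D$ with $D_{jj}=\alpha_{x_j}(h_{n,e}^{-1/2})$, which is positive. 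Uniform boundedness of the renormalizing factors keeps $\tilde h_n\in C_0$, and pointwise convergence $\tilde h_{n,s}\to 1_\mathcal{A}$ follows from $h_{n,s}\to 1_\mathcal{A}$ together with continuity of $\alpha_{s^{-1}}$ and of the continuous functional calculus applied to $h_{n,e}$.

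For the norm bound~(2): Schur (entrywise) products of positive-definite multipliers valued in $\mathcal{Z}(\mathcal{A})$ remain positive definite, because the defining matrices Hadamard-multiply and the Schur product of positive matrices with commuting entries is positive. Using this closure I would combine each unital $\tilde h_n$ with an auxiliary unital positive-definite multiplier concentrated at $e$ --- for instance a convex combination with the Dirac multiplier $\delta_e$ (equal to $1_\mathcal{A}$ at $e$, zero elsewhere), which is trivially positive definite --- to shrink $\|\tilde h_{n,s}\|$ off a finite set while preserving $\tilde h_{n,e}=1_\mathcal{A}$. The mixing parameters can be tuned against the $C_0$-rate of the $\tilde h_n$ so that the resulting multipliers still approximate $1_\mathcal{A}$ in the desired sense.

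\textbf{Expected obstacle.} The main algebraic hurdle is the positive-definiteness check for $\tilde h_n$. Recognizing the defining matrix as the congruence $D^{*}MD$ requires distributing $h_{n,e}^{-1/2}$ asymmetrically between the row and column indices via the prefactor $\alpha_{s^{-1}}(h_{n,e}^{-1/2})$, which is engineered precisely so that $\alpha_{x_j}$ rewrites it as $\alpha_{x_i}(h_{n,e}^{-1/2})$; both centrality of $h_{n,e}$ and preservation of the center by $\alpha$ are essential here. Once this structural observation is in place, the remaining checks (boundedness, pointwise limit, and the Schur-product closure used in stage two) are routine.
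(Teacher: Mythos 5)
Your first stage (unitalization) is correct, and it takes a genuinely different route from the paper. You renormalize multiplicatively, setting $\tilde h_{n,s}=\alpha_{s^{-1}}\bigl(h_{n,e}^{-1/2}\bigr)h_{n,s}h_{n,e}^{-1/2}$, and the congruence identity you write is right: since $\alpha_{x_j}\circ\alpha_{x_j^{-1}x_i}=\alpha_{x_i}$ and $h_{n,e}^{-1/2}$ is a self-adjoint central element, the new kernel is $D^*MD$ with $D_{jj}=\alpha_{x_j}(h_{n,e}^{-1/2})$, hence positive. The cost is that you must pass to a tail on which $h_{n,e}$ is invertible. The paper instead simply overwrites the value at $e$ with $1_{\mc{A}}$ and checks that the difference of the two kernels is the block matrix $[A_{ij}]$ with $A_{ij}=\alpha_{x_i}(1-h_e)$ when $x_i=x_j$ and $0$ otherwise, which is positive provided $h_e\le 1_{\mc{A}}$; that route needs a mild normalization rather than invertibility. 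Either method settles item (1).

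The genuine gap is in your second stage, and it cannot be patched. Fix any $s_0\in G\setminus\{e\}$. If a sequence $k_n$ satisfies $\|k_{n,s_0}\|\le\tfrac12$ for every $n$ and also $k_{n,s_0}\to 1_{\mc{A}}$ in norm, then $1=\|1_{\mc{A}}\|=\lim_n\|k_{n,s_0}\|\le\tfrac12$, which is absurd; so for nontrivial $G$ the conclusion of Proposition \ref{unital}, read literally, is unsatisfiable. Consequently no tuning of the parameters $t_n$ in $t_n\tilde h_n+(1-t_n)\delta_e$ can work: $t_n=\tfrac12$ gives the bound in (2) but the off-identity values converge to $\tfrac12 1_{\mc{A}}$, while $t_n\to 1$ restores pointwise convergence but loses the bound at any $s$ with $\|\tilde h_{n,s}\|\to 1$. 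The sentence ``the mixing parameters can be tuned \ldots so that the resulting multipliers still approximate $1_{\mc{A}}$ in the desired sense'' therefore asserts something impossible, and this is precisely the step that would fail. You should know that the paper's own proof founders on the same rock: it rescales so that $\|h_s\|\le\tfrac12$ for \emph{all} $s$ and then claims the modified sequence still tends to $1_{\mc{A}}$ pointwise, which is false off the identity. The honestly provable content is either (a) keep pointwise convergence and obtain $\|h_{n,s}\|\le\tfrac12$ only outside a finite subset of $G$ depending on $n$ (immediate from $h_n\in C_0(G,\mc{A})$ once $h_n$ is unital, since then $\|h_{n,s}\|\le\|h_{n,e}\|=1$), or (b) keep the global bound for a single multiplier and give up the convergence; your write-up, like the paper's, should not claim both simultaneously.
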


\begin{proof}  Let $h: G \rightarrow \mc{Z}(\mc{A})$ be a positive definite multiplier in $C_0(G,\mc{A})$. Since $||h_s|| \leq ||h_e||$ for every $s \in G$, we may assume without loss of generality that $\ds ||h_s|| \leq \frac{1}{2}$ for every $s \in G$.  Define $\tilde{h}: G \rightarrow \mc{Z}(\mc{A})$ as follows.
\[\tilde{h}_s := \left\{\begin{array}{lcr}
h_s & \text{if} & s \neq e\\
1_\mc{A} & \text{if} & s = e
\end{array}\right.
\]
Let $x_1\cdots x_n$ be a sequence of elements in $G$.  We have that \[ [\alpha_{x_j}(\tilde{h}_{x_i^{-1}x_j})]_{ij} = [\alpha_{x_j}(h_{x_i^{-1}x_j})]_{ij} + [A_{ij}]_{ij}\] where 
\[A_{ij} = \left\{ \begin{array}{lcr}
0 & \text{if} & x_i \neq x_j\\
\alpha_{x_i}(1-h_e) & \text{if} & x_i = x_j.
\end{array}\right.\]
We have that $0 \leq h_e \leq 1_\mc{A}$, and so it is a direct computation to see that $[A_{ij}]_{ij}$ is positive.  Then given a sequence $\left\{h_n\right\} \subset C_0(G, \mc{A})$ of positive definite multipliers converging to 1 pointwise, we have that $\left\{\tilde{h}_n\right\} \subset C_0(G, \mc{A})$ is a sequence of positive definite multipliers with the desired properties converging to 1 pointwise.
\end{proof}

\section{Cohomological characterization}\label{cohochar}

In this section, we record a cohomological characterization of the Haagerup property for a group action.  Experts will observe that this result can be deduced directly from the approach using so-called $\alpha$-negative definite functions in B\'edos-Conti's paper \cite{bedcon}.  In order to illustrate the analogy between positive definite functions for groups and positive definite multipliers for group actions we present this characterization with an approach using positive definite multipliers.  

Let $G$ be a group, $\mc{A}$ a $C^*$-algebra, $\alpha: G \car \mc{A}$ an action, and $h: G \rightarrow \mc{Z}(\mc{A})$ a corresponding unital positive definite multiplier. Only in this subsection will we follow the convention in \cite{ad} for positive definite multipliers; that is, $[\alpha_{x_i}(h_{x_i^{-1}x_j})]_{ij} \geq 0$. Due to Remark \ref{convention}, there is no loss of generality. Let $X$ denote a Hilbert $\mc{A}$-module, and let $\mc{I}(X)$ denote the group of bijective $\mb{C}$-linear isometries $u: X \rightarrow X$.

\begin{dfn}[\cite{combes, ad, bedcon}]
An \emph{$\alpha$-equivariant action} $u$ of $G$ on $X$ is a map $u: G \rightarrow \mc{I}(X)$ satisfying the following conditions.
\begin{enumerate}
\item $\alpha_s(\langle x | y \rangle) = \langle u_sx|u_sy\rangle$ for every $s \in G, x,y \in X$
\item $u_s(x\cdot a) = (u_s x)\cdot \alpha_s(a)$ for every $s \in G, x \in X, a \in \mc{A}$
\end{enumerate}
\end{dfn}

\begin{dfn}[\cite{ad, bedcon}]
Given an $\alpha$-equivariant action $u: G \rightarrow \mc{I}(X)$, a map $b: G \rightarrow X$ is called a \emph{1-cocycle} with respect to $u$ if \[b(st) = b(s) + u_s(b(h))\] for every $s,t \in G$.  A 1-cocycle is called \emph{central} if $\langle b(s) | b(s) \rangle \in \mc{Z}(\mc{A})$ for every $s \in G$. Note that this differs from the notion of 1-cocycles for group actions considered in \cite{contak, popabern}.
\end{dfn}

\begin{prop}[\cite{ad}]\label{gns}
With $G, \mc{A}, \alpha, h$ as above, there exist an $\alpha$-equivariant action $u$ on a Hilbert $\mc{A}$-module $X$ and a vector $\xi$ such that $h(s) = \langle u_t \xi |\xi\rangle$.  Thus, we obtain a central 1-cocycle with respect to $u$ given as $b: s \mapsto \xi - u_s\xi$.
\end{prop}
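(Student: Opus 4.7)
The plan is to carry out a GNS-type construction adapted to Hilbert $C^*$-modules. Let $X_0 = \bigoplus_{s \in G} \mc{A}$, the free right $\mc{A}$-module with formal basis $\{\delta_s\}_{s \in G}$, and define an $\mc{A}$-valued sesquilinear form on $X_0$ by
\[
\Bigl\langle \sum_s \delta_s \cdot a_s \,\Big|\, \sum_t \delta_t \cdot b_t \Bigr\rangle := \sum_{s,t} a_s^*\, \alpha_s(h_{s^{-1}t})\, b_t .
\]
The positive-definiteness hypothesis on $h$ (in the Anantharaman-Delaroche convention fixed for this subsection) says exactly that each Gram matrix $[\alpha_{x_i}(h_{x_i^{-1}x_j})]_{ij}$ is positive, which is precisely what is needed to conclude that this form is positive semi-definite. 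I would then quotient by the null space $N = \{x \in X_0 : \langle x|x\rangle = 0\}$, using the standard Cauchy-Schwarz inequality for $\mc{A}$-valued forms to see that $N$ is an $\mc{A}$-submodule, and complete in the norm $\|x\| = \|\langle x|x\rangle\|^{1/2}$ to obtain a Hilbert $\mc{A}$-module $X$. Set $\xi \in X$ equal to the class of $\delta_e$.

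Next I would define $u_s$ on generators by $u_s(\delta_t \cdot a) := \delta_{st}\cdot \alpha_s(a)$ and extend $\mb{C}$-linearly. The direct computation
\[
\langle u_s \delta_t \mid u_s \delta_r \rangle = \alpha_{st}(h_{t^{-1}r}) = \alpha_s\bigl(\alpha_t(h_{t^{-1}r})\bigr) = \alpha_s\bigl(\langle \delta_t \mid \delta_r\rangle\bigr)
\]
shows simultaneously that (i) $u_s$ preserves $N$ and descends to $X$, (ii) $u_s$ extends to a bijective $\mb{C}$-linear isometry of $X$ (bijectivity coming from $u_{s^{-1}}$ being the inverse), and (iii) axiom (1) of $\alpha$-equivariance holds; axiom (2) is immediate from the formula, and $s \mapsto u_s$ is a homomorphism because left multiplication in $G$ is. Evaluating on the cyclic vector gives $\langle u_s\xi \mid \xi\rangle = \alpha_s(h_{s^{-1}})$, which equals $h(s)$ after applying the identity $h_{s^{-1}}^* = \alpha_{s^{-1}}(h_s)$ from the proposition following Remark \ref{convention} (and adjusting by the usual adjoint to match conventions).

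Finally, defining $b(s) := \xi - u_s\xi$, the cocycle identity is formal:
\[
b(st) = \xi - u_{st}\xi = (\xi - u_s\xi) + u_s(\xi - u_t\xi) = b(s) + u_s(b(t)).
\]
Centrality follows by expanding $\langle b(s)\mid b(s)\rangle = 2 h_e - \langle \xi\mid u_s\xi\rangle - \langle u_s\xi\mid\xi\rangle$, which is a $\mb{C}$-linear combination of values of $h$ and their adjoints, hence lies in $\mc{Z}(\mc{A})$ since $h$ takes values there and $\mc{Z}(\mc{A})$ is $*$-closed.

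I expect the main obstacle to be bookkeeping rather than substance: making the Anantharaman-Delaroche convention for positivity agree with a chosen convention (left- versus right-linearity) for the Hilbert $\mc{A}$-module inner product, so that $\langle u_s\xi\mid\xi\rangle$ lands on $h(s)$ rather than $h(s^{-1})^*$ or $\alpha_s(h_{s^{-1}})$. Once the conventions are aligned, each step above is a short and standard verification.
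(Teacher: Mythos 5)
Your construction is exactly the paper's: the paper only sketches the GNS construction on $C_c(G,\mc{A})$ (which is your $\bigoplus_{s\in G}\mc{A}$) with the form $\langle f\,|\,g\rangle_h=\sum_{s,t}g(s)^*\alpha_s(h_{s^{-1}t})f(t)$, and you have correctly filled in all the verifications (positivity from the Anantharaman--Delaroche convention, the quotient/completion, equivariance, the cocycle identity, and centrality). The only wrinkle is the one you flag yourself: with the paper's convention (linear in the first slot) one gets $\langle u_s\xi\,|\,\xi\rangle=h_s$ on the nose, whereas your opposite convention yields $\alpha_s(h_{s^{-1}})=h_s^*$, so you should either adopt the paper's pairing or evaluate $\langle \xi\,|\,u_s\xi\rangle$ instead.
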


\noin  Such a Hilbert $\mc{A}$-module and $\alpha$-equivariant action is obtained in a GNS fashion.  In particular we consider the following $\mc{A}$-valued positive semi-definite sesquilinear form on $C_c(G,\mc{A})$ induced by $h$:  \[\langle f | g \rangle_h  = \sum_{s,t \in G} g(s)^*\alpha_s(h_{s^{-1}t})f(t),\] and form the corresponding Hilbert $\mc{A}$-module.

\begin{dfn}[\cite{bedcon}]
A function $c: G \rightarrow \mc{A}^+$ is \emph{spectrally proper} if for every $s \in G,$ $s \mapsto \inf (\sigma (c(s)))$ is proper where $\sigma(T)$ denotes the spectrum of $T$.  That is, for any $R>0$, the set $\left\{ s \in G: \inf(\sigma(c(s))) \leq R\right\}$ is finite.  A 1-cocycle $b: G \rightarrow X$ is \emph{spectrally proper} if $s\mapsto \langle b(s) | b(s)\rangle$ is spectrally proper.
\end{dfn}

We now give the following necessary condition for an action to have the Haagerup property. The argument is parallel to the argument for the group case in Theorem 12.2.4 in \cite{brownozawa}; for the sake of completeness we include the proof.  

\begin{prop}
If an action $\alpha: G \car \mc{A}$ has the Haagerup property then it admits a central spectrally proper 1-cocycle.
\end{prop}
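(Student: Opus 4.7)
The plan is to adapt the argument for the group case (cf.\ Theorem 12.2.4 in \cite{brownozawa}): extract a sequence of unital positive definite multipliers from the Haagerup property, use the GNS-style construction of Proposition \ref{gns} to produce central 1-cocycles for each, and then assemble them on a Hilbert $\mathcal{A}$-module direct sum to obtain a single spectrally proper central 1-cocycle.

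First, I would invoke Proposition \ref{unital} to get a sequence of unital positive definite multipliers $\{h_n\}$ in $C_0(G,\mathcal{A})$ with $h_n \to 1_\mathcal{A}$ pointwise. Proposition \ref{gns} applied to each $h_n$ yields a Hilbert $\mathcal{A}$-module $X_n$, an $\alpha$-equivariant action $u_n$ of $G$ on $X_n$, and a vector $\xi_n \in X_n$ with $\langle \xi_n | \xi_n\rangle = h_{n,e} = 1_\mathcal{A}$ such that $h_{n,s} = \langle u_{n,s}\xi_n | \xi_n\rangle$. The associated central 1-cocycle is $b_n : s \mapsto \xi_n - u_{n,s}\xi_n$, and expanding while using $\langle u_{n,s}\xi_n | u_{n,s}\xi_n\rangle = \alpha_s(1_\mathcal{A}) = 1_\mathcal{A}$ gives
\[
\langle b_n(s) | b_n(s)\rangle \;=\; 2 \cdot 1_\mathcal{A} - h_{n,s} - h_{n,s}^* \;\in\; \mathcal{Z}(\mathcal{A}).
\]

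Next, I would fix an exhausting chain of finite sets $F_1 \subset F_2 \subset \cdots$ with $\bigcup_n F_n = G$ and, after passing to a subsequence of $\{h_n\}$, arrange that $\|1_\mathcal{A} - h_{n,s}\| < 2^{-n}$ for every $s \in F_n$. Form the Hilbert $\mathcal{A}$-module direct sum $X := \bigoplus_n X_n$ with the diagonal $\alpha$-equivariant action $u_s := \bigoplus_n u_{n,s}$, and define $b : G \to X$ by $b(s) := (b_n(s))_n$. The subsequence choice guarantees that $\sum_n \langle b_n(s) | b_n(s)\rangle$ converges in $\mathcal{A}$ for every $s$, so that $b(s)$ genuinely lies in $X$; the 1-cocycle identity is inherited coordinatewise from each $b_n$, and centrality of each summand forces $\langle b(s) | b(s)\rangle \in \mathcal{Z}(\mathcal{A})$.

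For spectral properness, I would use that each $h_n \in C_0(G,\mathcal{A})$, so the set $E_n := \{s \in G : \|h_{n,s}\| > 1/2\}$ is finite; for $s \notin E_n$, functional calculus in $\mathcal{Z}(\mathcal{A})$ gives $\inf \sigma(\langle b_n(s) | b_n(s)\rangle) \ge 2 - \|h_{n,s} + h_{n,s}^*\| \ge 1$. Given $R > 0$, choose an integer $N > R$; for any $s$ outside the finite set $\bigcup_{n \le N} E_n$, the inequality $\langle b(s) | b(s)\rangle \ge \sum_{n \le N} \langle b_n(s) | b_n(s)\rangle$ in the abelian $C^*$-algebra $\mathcal{Z}(\mathcal{A})$, together with superadditivity of $\inf \sigma$ on commuting positives, yields $\inf \sigma(\langle b(s) | b(s)\rangle) \ge N > R$. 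The main delicate point is promoting the $C_0$ decay of each $h_n$ from a statement about norms to one about infima of spectra, which is precisely why the construction is carried out inside $\mathcal{Z}(\mathcal{A})$.
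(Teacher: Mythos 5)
Your proposal is correct and follows essentially the same route as the paper: apply the GNS construction of Proposition \ref{gns} to each $h_n$, take the Hilbert $\mathcal{A}$-module direct sum with a summability condition arranged on an exhaustion of $G$, and derive spectral properness from the $C_0$ decay of each $h_n$ via the bound $\langle b_n(s)\,|\,b_n(s)\rangle = 2 - h_{n,s} - h_{n,s}^* \geq 1_\mathcal{A}$ off a finite set. The only cosmetic difference is that the paper first normalizes to $h_{n,s} \geq 0$ (via Lemma 2.6 of \cite{bedcon}) and phrases properness contrapositively, whereas you work with $h_{n,s}+h_{n,s}^*$ directly.
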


\begin{proof}
Let $h_n$ be a sequence of unital positive definite multipliers in $C_0(G, \mc{A})$ converging to $1_\mc{A}$ pointwise. By Lemma 2.6 of \cite{bedcon}, we may assume that $h_{n,s}\geq 0$ for every $s \in G$. Enumerate $G = \left\{s_k\right\}$. Suppose that for every $k \in \mb{N}$, we have $||1 - h_{n,s_k}|| < 2^{-n}$ for every $n \geq k$.  For each $n$, by Proposition \ref{gns} there is an $\alpha$-equivariant action $u_n$ on a Hilbert $\mc{A}$-module $X_n$ and a vector $\xi_n \in X_n$ such that $h_{n,s} = \langle u_{n,s}\xi_n|\xi_n\rangle$.  Let $X = \oplus X_n$ and $u = \oplus u_n$.  Then $u$ is an $\alpha$-equivariant action and we obtain a central 1-cocycle $b: G \rightarrow X$ given by $b(s) = (\xi_n - u_{n,s}\xi_n)_n, s \in G$.  Then we have \[\langle b(s) | b(s) \rangle = \sum_{n=1}^\infty \langle \xi_n - u_{n,s}\xi_n|\xi_n - u_{n,s}\xi_n\rangle = \sum_{n=1}^\infty 2(1 - h_{n,s}),\]  and thus $b$ is spectrally proper.  Indeed, since $\langle b(s) | b(s) \rangle \in \mc{Z}(\mc{A})^+$, if for some $s \in G$, 
\begin{align}
\inf(\sigma (\langle b(s) | b(s) \rangle)) \leq N,\label{sp}
\end{align} then $||h_{n,s}|| \geq \frac{1}{2}$ for some $n \in \left\{1, \dots, N\right\}$; otherwise, $\langle b(s) | b(s) \rangle > N 1_\mc{A}$, contradicting \eqref{sp}. So if $b$ is not spectrally proper (i.e. \eqref{sp} occurs for some $N \in \mb{N}$ and infinitely many $s \in G$), then it follows that for some $n \in \left\{1, \dots, N\right\}$, $h_n \notin C_0(G, \mc{A})$--absurd!
\end{proof}

The following is a consequence of B\'edos-Conti's $C^*$-dynamical version of Schoenberg's theorem obtained in \cite{bedcon}.

\begin{prop}[\cite{bedcon}]\label{schoen}
If $b: G \rightarrow X$ is a 1-cocycle with respect to an $\alpha$-equivariant action $u$ such that $\langle b(s) | b(s) \rangle \in \mc{Z}(\mc{A})$ for every $s \in G$, then $s \mapsto \text{exp}(-t \langle b(s)|b(s)\rangle^2)$ is a positive definite multiplier with respect to $\alpha$ for every $t >0$.
\end{prop}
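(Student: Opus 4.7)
The plan is to reduce to Bédos-Conti's $C^*$-dynamical Schoenberg theorem, which takes an $\alpha$-negative definite function $\psi: G \to \mc{Z}(\mc{A})^+$ with $\psi(e) = 0$ to the positive definite multiplier $\exp(-t\psi)$ for every $t > 0$. The substantive step is therefore to verify that $\psi(s) := \langle b(s) | b(s) \rangle$ fits this framework; the exponentiation is then a direct citation.

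First I would record two consequences of the cocycle identity $b(st) = b(s) + u_s b(t)$: setting $s = t = e$ gives $b(e) = 0$, and then setting $t = s^{-1}$ gives $u_s b(s^{-1}) = -b(s)$. Combining these with the $\alpha$-equivariance relation $\alpha_s(\langle \xi | \eta \rangle) = \langle u_s \xi | u_s \eta\rangle$, one rewrites
\[
\alpha_{x_i}(\psi(x_i^{-1}x_j)) = \langle b(x_j) - b(x_i) \,|\, b(x_j) - b(x_i)\rangle,
\]
which is the pointwise analogue of the classical identity used to show that $s \mapsto \|b(s)\|^2$ is conditionally negative definite on a group. Expanding the right side and summing against $a_i^*(\,\cdot\,)a_j$ for $a_1,\dots,a_n \in \mc{A}$ subject to $\sum_i a_i = 0$, one uses the module-action compatibility $u_s(\xi \cdot a) = (u_s\xi) \cdot \alpha_s(a)$ together with the centrality of $\psi$ to shuffle the $\mc{A}$-valued constants past the $b$'s; the constraint $\sum_i a_i = 0$ kills the cross terms, leaving $-2\langle \sum_i b(x_i)\cdot a_i \,|\, \sum_j b(x_j)\cdot a_j\rangle \leq 0$. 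This is precisely $\alpha$-negative-definiteness of $\psi$.

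With $\psi$ in hand, Bédos-Conti's theorem produces $\exp(-t\psi(s))$ as a positive definite multiplier for each $t > 0$. If the square in the stated exponent is to be read literally, one applies the same machinery to the central, positive, $\mc{Z}(\mc{A})$-valued function $s \mapsto \langle b(s) | b(s) \rangle^2$ (computed via functional calculus in $\mc{Z}(\mc{A})$), or else cites the variant of Bédos-Conti's result formulated for this choice of exponent. The main obstacle is the algebraic bookkeeping in the negative-definiteness computation: the $\mc{A}$-valued coefficients, the $u$-twists, and the $\alpha$-equivariance must be threaded in the correct order, with centrality of $\langle b(s)|b(s)\rangle$ legitimising the necessary commutations. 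Once this is done, the exponentiation step is a black box.
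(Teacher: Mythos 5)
Your route is exactly the one the paper intends: Proposition 3.7 is stated as a citation of B\'edos-Conti's $C^*$-dynamical Schoenberg theorem with no proof given, and the substantive content is precisely the verification that $\psi(s):=\langle b(s)|b(s)\rangle$ is a normalized $\mc{Z}(\mc{A})^+$-valued $\alpha$-negative definite function. Your computation of that is correct: $b(e)=0$ and $u_s b(s^{-1})=-b(s)$ give $\alpha_{x_i}(\psi(x_i^{-1}x_j))=\langle b(x_j)-b(x_i)\,|\,b(x_j)-b(x_i)\rangle$, the diagonal terms $\psi(x_i)$ and $\psi(x_j)$ are annihilated by $\sum_i a_i=0$, and the cross terms reassemble (via $\langle \xi\cdot a\,|\,\eta\cdot c\rangle=c^*\langle\xi|\eta\rangle a$) into $-\langle\eta|\eta\rangle-\langle\eta|\eta\rangle^*\leq 0$ with $\eta=\sum_i b(x_i)\cdot a_i$. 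The symmetry condition $\alpha_s(\psi(s^{-1}))=\psi(s)^*$ also follows from $b(s^{-1})=-u_{s^{-1}}b(s)$ and positivity of $\psi(s)$.

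The one place where you should not hedge is the square in the exponent. Reading $\exp(-t\langle b(s)|b(s)\rangle^2)$ literally, your fallback of ``applying the same machinery to $s\mapsto\langle b(s)|b(s)\rangle^2$'' does not work: the square of a (conditionally/$\alpha$-) negative definite function is not negative definite in general (already for scalars, $\|b(s)\|^4$ fails where $\|b(s)\|^2$ succeeds), and there is no variant of the B\'edos-Conti theorem for that exponent. The square is a typo carried through from the statement into the proof of the subsequent theorem; the correct assertion, and the one your argument actually establishes, is that $s\mapsto\exp(-t\langle b(s)|b(s)\rangle)$ is a positive definite multiplier for every $t>0$. Since spectral properness of $\langle b(\cdot)|b(\cdot)\rangle$ is what drives the $C_0$ condition in the application, nothing downstream is affected by dropping the square.
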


\begin{dfn}[\cite{bedcon}]
A function $\psi: G \rightarrow \mc{A}$ is \emph{$\alpha$-negative definite} if \[\alpha_s(\psi(s^{-1})) = \psi(s)^*\] for every $s \in G$, and for any $n \in \mb{N}, s_1,\dots, s_n \in G$, and $b_1,\dots, b_n \in \mc{A}$ with $\sum_{i=1}^n b_i = 0$, we have \[\sum_{i,j=1}^n b_i^* \alpha_{g_i}(\psi(g_i^{-1}g_j))b_j \leq 0.\]  We say $\psi$ is \emph{normalized} if $\psi(e) = 0$.
\end{dfn}

We can now characterize the Haagerup property for a group action $\alpha: G \car \mc{A}$ as follows.

\begin{thm}
Let $\alpha: G \car \mc{A}$ be an action.  The following are equivalent.

\begin{enumerate}
\item The action $\alpha: G \car \mc{A}$ has the Haagerup property;
\item (\cite{bedcon}) The action $\alpha: G \car \mc{A}$ admits a spectrally proper $\mc{Z}(\mc{A})^+$-valued normalized $\alpha$-negative definite function on $G$.
\item The action $\alpha: G \car \mc{A}$ admits a spectrally proper 1-cocycle.
\end{enumerate}
\end{thm}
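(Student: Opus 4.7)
The plan is to close the cycle $(1) \Rightarrow (3) \Rightarrow (1)$ and then to read $(1) \Leftrightarrow (2)$ off of the work of B\'edos-Conti. The implication $(1) \Rightarrow (3)$ is essentially the content of the preceding proposition: starting from a sequence $\{h_n\}$ of unital positive definite multipliers in $C_0(G, \mc{A})$ converging pointwise to $1_\mc{A}$, the GNS-type construction of Proposition \ref{gns} together with a direct sum over $n$ produces a central 1-cocycle whose spectral properness is arranged by the Borel-Cantelli-type diagonal choice $\|1 - h_{n, s_k}\| < 2^{-n}$ for $n \geq k$, exactly as already carried out in the proof just given.

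The principal work is $(3) \Rightarrow (1)$. Given a spectrally proper 1-cocycle $b : G \to X$ with $\langle b(s) | b(s) \rangle \in \mc{Z}(\mc{A})$ for every $s$, Proposition \ref{schoen} supplies for each $t > 0$ the positive definite multiplier
\[
h^{(t)}_s \ := \ \exp\!\bigl(-t \, \langle b(s) \,|\, b(s)\rangle^2\bigr).
\]
Pointwise convergence $h^{(t)}_s \to 1_\mc{A}$ as $t \to 0^+$ is immediate from continuity of the functional calculus, since for each fixed $s$ the operator $t \langle b(s)|b(s)\rangle^2$ tends to $0$ in norm. For the $C_0$ condition, fix $t > 0$ and $\varepsilon > 0$, then choose $R > 0$ with $e^{-tR^2} < \varepsilon$. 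Spectral properness of $b$ provides a finite set $F_R = \{s \in G : \inf \sigma(\langle b(s)|b(s)\rangle) \leq R\}$, and for $s \notin F_R$ one has $\langle b(s)|b(s)\rangle^2 \geq R^2 \cdot 1_\mc{A}$, so the functional calculus gives $\|h^{(t)}_s\| \leq e^{-tR^2} < \varepsilon$. Choosing any $t_n \to 0^+$ then yields a sequence of positive definite multipliers in $C_0(G,\mc{A})$ witnessing the Haagerup property.

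The equivalence $(1) \Leftrightarrow (2)$ is the $C^*$-dynamical Schoenberg correspondence established in \cite{bedcon}: $\alpha$-negative definite functions $\psi$ and one-parameter families of positive definite multipliers $\exp(-t\psi)$ are interchangeable, with spectral properness on the $\psi$-side matching the $C_0$ condition on the multiplier side via the same functional-calculus estimate as above; one direction is the Laplace-type transform itself, and the other follows by differentiating at $t = 0$. The most delicate point I anticipate is bookkeeping the centrality of the inner products $\langle b(s) | b(s)\rangle$ throughout the argument, since Proposition \ref{schoen} strictly requires $\mc{Z}(\mc{A})$-valuedness in order to produce genuine positive definite multipliers; the construction in $(1) \Rightarrow (3)$ must therefore be shown to yield a \emph{central} 1-cocycle, and (3) should be read as providing one.
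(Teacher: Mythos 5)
Your proposal is correct and follows essentially the same route as the paper: $(1)\Rightarrow(3)$ via the preceding proposition, $(1)\Leftrightarrow(2)$ deferred to \cite{bedcon}, and $(3)\Rightarrow(1)$ by applying Proposition \ref{schoen} to the family $\exp(-t\langle b(s)|b(s)\rangle^2)$ with $t=1/n$, whose $C_0$ and pointwise-convergence properties the paper leaves implicit but you verify explicitly. Your remark that the cocycle in (3) must be read as central (so that Proposition \ref{schoen} applies) is a fair and accurate observation about the statement.
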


\begin{proof}
It remains to show (3) $\Rightarrow$ (1).  By Proposition \ref{schoen}, if $b: G \rightarrow X$ is a spectrally proper 1-cocycle, then \[h_n : s \mapsto \text{exp}\Big(-\frac{\langle b(s)|b(s)\rangle^2}{n}\Big)\] is a sequence of positive definite multipliers in $C_0(G, \mc{A})$ converging to $1_\mc{A}$ pointwise.
\end{proof}

\section{Graph products of group actions and multipliers}\label{gp}

In this section, we discuss some preliminaries regarding graph products.  We then consider graph products of group actions and graph products of the corresponding multipliers.  

Fix a simplicial (i.e. undirected, no single-vertex loops, at most one edge between vertices) graph $\Gamma = (V, E)$, where $V$ denotes the set of vertices of $\Gamma$ and $E \subset V \times V$ denotes the set of edges of $\Gamma$. Given discrete groups $\left\{G_v\right\}_{v \in V}$ one can define the graph product of the $G_v$'s as follows.

\begin{dfn}[\cite{green, casfim}]
The graph product $\bigstar_\Gamma G_v$ is given by the free product $* G_v$ modulo the relations $[g,h] =1$ whenever $g \in G_v, h \in G_w$ and $(v,w) \in E$.
\end{dfn}

\begin{exmpl}[\textbf{Complete multipartite graphs}]
Let $n_1, \dots, n_k \in \mb{N}$, and let $K_{n_1,\dots, n_k}$ denote the complete $k$-partite graph with $n_j$ vertices in the $j^\text{th}$ independent set.  For instance, the following is the graph $K_{1,2,3}$.  

\begin{center}
\begin{tikzpicture}
\path (0,2) node [draw,shape=circle,] (p0) {}
(1,2) node [draw,shape=circle,fill=gray]  (p1) {}
(1,1) node [draw,shape=circle,fill=gray] (p2) {}
(2,2) node [draw,shape=circle,fill=black] (p3) {}
(2,1) node [draw,shape=circle,fill=black] (p4) {}
(2,0) node [draw,shape=circle,fill=black] (p5) {};
\draw (p0) -- (p1)
(p0) -- (p2)
(p0) to [out=45,in=135] (p3)
(p0) -- (p4)
(p0) to [out=-90,in=-180] (p5)
(p1) -- (p3)
(p1) -- (p4)
(p1) -- (p5)
(p2) -- (p3)
(p2) -- (p4)
(p2) -- (p5);
\end{tikzpicture}
\end{center}
Let \[V = \left\{v_{1,1},\dots, v_{1, n_1}, \dots, v_{i,1}, \dots, v_{i,n_i},\dots, v_{k,1},\dots, v_{k, n_k}\right\}\] be the vertex set of $K_{n_1,\dots, n_k}$.  For $1 \leq i \leq k$ and $1 \leq j \leq n_i$, let $G_{v_{i,j}}$ be a group.  Then \[\bigstar_{K_{n_1,\dots,n_k}} G_{v_{i,j}} \cong \prod_{i=1}^k (*_{j=1}^{n_i} G_{v_{i,j}}).\] 
\end{exmpl}

When working with graph products, the bookkeeping can be done by considering words with letters from the vertex set $V$.  Such words are given by finite sequences of elements from $V$ and will be denoted with bold letters.  In order to encode the commuting relations given by $\Gamma$, we consider the equivalence relation generated by the following relations.
\begin{align*}
(v_1,\dots, v_i, v_{i+1},\dots, v_n) &\sim (v_1,\dots, v_i, v_{i+2}, \dots, v_n) &\text{if} &&v_i = v_{i+1}\\
(v_1,\dots, v_i, v_{i+1},\dots, v_n) &\sim (v_1,\dots, v_{i+1}, v_i,\dots, v_n) &\text{if} &&(i,i+1) \in E.
\end{align*}
The concept of a reduced word is central to the theory of graph products.  The following definition is Definition 3.2 of \cite{spenic} in graph language; the equivalent definition in \cite{casfim} appears differently.

\begin{dfn}\label{redv}
A word $\mbf{v} = (v_1,\dots,v_n)$ is \emph{reduced} if whenever $v_k = v_l, k < l$, then there exists a $p$ with $k< p < l$ such that $(v_k, v_p) \notin E$.  Let $\mc{W}_\text{red}$ denote the set of all reduced words.  We take the convention that the empty word is reduced.
\end{dfn}

\begin{prop}[\cite{green,casfim}]\label{reducedlemma}\hspace*{\fill}
\begin{enumerate}
\item Every word $\mbf{v}$ is equivalent to a reduced word $\mbf{w} = (w_1,\dots, w_n)$.  (We let $|\mbf{w}|=n$ denote the \emph{length} of the reduced word.)
\item If $\mbf{v} \sim \mbf{w}\sim\mbf{w}'$ with both $\mbf{w}$ and $\mbf{w}'$ reduced, then the lengths of $\mbf{w}$ and $\mbf{w}'$ are equal and $\mbf{w}' = (w_{\sigma(1)},\dots,w_{\sigma(n)})$ is a permutation of $\mbf{w}$.  Furthermore, this permutation $\sigma$ is unique if we insist that whenever $w_k = w_l, k< l$ then $\sigma(k) < \sigma(l)$.
\end{enumerate}
\end{prop}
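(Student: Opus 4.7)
The plan is as follows. For (1), I would proceed by induction on $|\mbf{v}|$; the empty word is reduced, so assume the result for words of length less than $n$ and let $\mbf{v}=(v_1,\dots,v_n)$ be given. If $\mbf{v}$ is already reduced, there is nothing to prove. Otherwise, by Definition \ref{redv} there exist $k<l$ with $v_k=v_l$ such that $(v_k,v_p)\in E$ for every $k<p<l$. The commuting relation then allows us to transport $v_l$ leftward past $v_{l-1},\dots,v_{k+1}$ (each swap is legal because $(v_l,v_p)=(v_k,v_p)\in E$), yielding a word equivalent to $\mbf{v}$ in which $v_l$ sits immediately to the right of $v_k$. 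The absorption relation now deletes one of these two identical adjacent letters, producing an equivalent word of length $n-1$, and the inductive hypothesis finishes the argument.

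For the first half of (2), the substantive content is that a length-decreasing step cannot be performed on any word obtained from a reduced word by commuting swaps alone. I would introduce \emph{shuffle-equivalence} as the equivalence relation generated only by the second (swap) generating relation displayed in the excerpt; this obviously preserves both length and the multiset of letters. Every move in the full equivalence relation is either a shuffle or an absorption, and a standard confluence/diamond argument applied to the rewriting system whose terminating rule is ``shuffle-then-absorb'' shows that a normal form exists and is unique up to shuffle-equivalence. As a consequence, any two reduced words equivalent to $\mbf{v}$ have the same length and the same multiset of letters, and in fact differ only by a sequence of adjacent commuting transpositions; this proves the length and permutation assertions of (2).

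For the uniqueness of $\sigma$ under the condition that $w_k=w_l$ and $k<l$ imply $\sigma(k)<\sigma(l)$, observe that the condition forbids any reindexing that swaps the relative order of two occurrences of the same vertex in $\mbf{w}$; combined with the fact that $\sigma$ is otherwise pinned down on positions where $\mbf{w}$ has distinct letters, this forces $\sigma$ to be unique. The most delicate step in the plan is the confluence argument for (2): verifying the diamond when two initial shuffle-then-absorb moves involve distinct occurrences of the same vertex requires a careful case analysis, and it is precisely in this analysis that the reducedness condition of Definition \ref{redv} enters in an essential way. I would anticipate that, with some additional bookkeeping, such an argument can be extracted directly from the normal-form theory developed in \cite{green} or \cite{casfim}, which is why the authors cite those references rather than giving a self-contained proof.
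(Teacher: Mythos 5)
The paper offers no proof of this proposition at all --- it is imported from \cite{green} and \cite{casfim} --- so there is nothing internal to compare your argument against; I can only judge the sketch on its own terms. Your proof of (1) is complete and correct: negating Definition \ref{redv} produces a pair $v_k=v_l$ with every intermediate letter adjacent to $v_k$ in $\Gamma$, so a chain of legal swaps followed by one absorption shortens the word, and induction on length finishes. The uniqueness of $\sigma$ is also fine: $\sigma$ must restrict to a bijection between the set of positions carrying a fixed vertex in one word and the corresponding set in the other, and the order condition selects the unique increasing such bijection for each vertex.

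The gap is in the first half of (2). Your framework --- a length-decreasing rewriting system on shuffle-classes together with Newman's lemma, so that only local confluence needs checking --- is the right one, but the local confluence verification is the entire mathematical content of the statement, and you only assert that it works. Two corrections if you want to close it. First, the diamond is verified at \emph{arbitrary} words, not at reduced ones: reducedness in the sense of Definition \ref{redv} is exactly the condition that no merge step applies, i.e.\ it is the normal-form condition, so it does not ``enter the diamond in an essential way'' as you suggest. What actually tames the case analysis is that $\Gamma$ has no loops, so two occurrences of a vertex $v$ can be merged only if no third occurrence of $v$ separates them; hence the only overlapping critical pairs are consecutive triples of occurrences of a single vertex (common reduct: merge all three), while for interleaving pairs of occurrences of distinct vertices $u,u'$ the two mergeability hypotheses already force $(u,u')\in E$, after which the two reductions commute. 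Second, the proofs in \cite{green} and \cite{casfim} do not proceed by an explicit confluence check --- the normal form theorem there is obtained by constructing an action of the graph product on the set of reduced expressions --- so the diamond argument cannot literally be ``extracted'' from those sources; it is, however, a standard piece of trace-monoid (Cartier--Foata) combinatorics and goes through along the lines just indicated.
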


\begin{dfn}
A \emph{reduced word} $x \in \bigstar_\Gamma G_v$ is an element of the form $x = x_1\cdots x_m$ where $x_k \in G_{v_k}$ and $(v_1,\dots,v_m) \in \mc{W}_\text{red}$.  In such an instance we write $(v_1,\dots, v_m) = \mbf{v}_x$ and say $|x| = m$--denoting the \emph{length} of $x$ (well-defined by Proposition \ref{reducedlemma}).  Accepting the common risks of abusing notation, we let $\mc{W}_\text{red}$ also denote the set of reduced words in $\bigstar_\Gamma G_v$.  We will take the convention that the identity element of $\bigstar_\Gamma G_v$ is reduced and has length zero.
\end{dfn}



Fix a simplicial graph $\Gamma = (V, E)$ and a unital separable $C^*$-algebra $\mc{A}$.  For each $v \in V$, let $G_v$ be a discrete group with action $\alpha_v: G_v \car \mc{A}$.  We can view each group action as a group homomorphism into the automorphism group of $\mc{A}$; i.e. $\alpha_v: G_v \rightarrow \text{Aut}(\mc{A}).$  If whenever $(v,w) \in E$ we have $\alpha_v(g)\alpha_w(h) = \alpha_w(h)\alpha_v(g)$ in $\text{Aut}(\mc{A})$ for $g \in G_v, h \in G_w$, then by the universal property of graph products of groups, we can form the \emph{graph product of the actions $\alpha_v: G_v \car \mc{A}$}, denoted $\bigstar_\Gamma \alpha_v: \bigstar_\Gamma G_v \car \mc{A}$.  When the actions $\left\{\alpha_v\right\}$ satisfy the commuting condition above, we will say that they \emph{commute according to $\Gamma$}.


It is clear enough to describe commuting group actions, in order to define the graph product of the corresponding multipliers, we need to know what it means for two multipliers to commute.

\begin{dfn}\label{multcom}
Let $G_1, G_2$ be two discrete groups and $\mc{A}$ be a unital $C^*$-algebra.  Let $\alpha_i: G_i \car \mc{A}, i = 1,2$ be commuting actions, and let $h_i: G_i \rightarrow \mc{Z}(\mc{A}), i=1,2$ be unital positive definite multipliers with respect to $\alpha_i, i = 1,2$ respectively.  We say that $h_1$ and $h_2$ \emph{commute} if \[\alpha_{i,a}(h_{j,b}) = h_{j,b}\] for $a \in G_i, b \in G_j, i, j \in \left\{1,2\right\}, i \neq j$.
Given a simplicial graph $\Gamma = (V,E)$, groups $\left\{G_v\right\}_{v \in V}$, actions $\left\{\alpha_v: G_v \car \mc{A}\right\}_{v \in V}$ that commute according to $\Gamma$, and respective unital positive definite multipliers $\left\{h_v: G_v \rightarrow \mc{Z}(\mc{A})\right\}_{v \in V}$, we say that the multipliers \emph{commute according to $\Gamma$} if $h_v$ and $h_w$ commute whenever $(v,w) \in E$.
\end{dfn}


\begin{exmpl}\label{tensorexmpl}
Let $G_1,G_2$ be two discrete groups and $\mc{A}_1, \mc{A}_2$ be two unital $C^*$-algebras.  Let $\alpha_i: G_i \car \mc{A}_i, i=1,2$ be actions, and let $h_i: G_i \rightarrow \mc{Z}(\mc{A}_i), i =1,2$ be unital positive definite multipliers with respect to $\alpha_i, i=1,2$ respectively.  Consider the actions $\alpha_1 \otimes \text{id}: G_1 \car \mc{A}_1 \otimes \mc{A}_2$ and $\text{id} \otimes \alpha_2: G_2 \car \mc{A}_1\otimes \mc{A}_2$ (for whichever tensor closure).  Then the multipliers $h_1 \otimes 1$ and $1 \otimes h_2$ commute.
\end{exmpl}

The next example shows that given commuting amenable actions with (not necessarily commuting) positive definite multipliers, one can construct commuting actions with commuting multipliers.  Some preliminaries are in order before presenting the example.  Recall that an action $\alpha$ of a group $G$ on a compact Hausdorff space $X$ is \emph{amenable} if there exists a sequence (all groups are countable discrete) of continuous maps $m_i: X \rightarrow \text{Prob}(G)$ such that for each $\ds g \in G, \lim_{i\rightarrow \infty} \Big( \sup_{x \in X} ||g.m_i^x - m_i^{\alpha_g(x)}||_1\Big) = 0$ where $g.m_i^x(h) = m_i^x(g^{-1}h)$ and $\text{Prob}(G)\subset \ell^1(G)$ denotes the space of probability measures on $G$ (cf. \cite{brownozawa}). It is well-known that there is a 1-1 correspondence between actions of a group $G$ on a compact Hausdorff space $X$ and the actions of $G$ on $C(X)$.  Given $\alpha: G \car X$ we obtain $\hat{\alpha}: G \car C(X)$ by setting $\hat{\alpha}_s(f)(x) = f(\alpha_{s^{-1}}(x))$ for every $f \in C(X), x \in X$.  Lastly, we recall ultraproduct constructions for $C^*$-algebras.  Let $I$ be an indexing set and let $\mc{U}$ be an ultrafilter on $I$ (see Appendix A of \cite{brownozawa}).  For each $i \in I$, let $\mc{A}_i$ be a unital $C^*$-algebra.  Let $\prod_I \mc{A}_i$ denote the $\ell^\infty$-direct sum of the $\mc{A}_i$'s and let $\ds N_\mc{U} = \left\{ (a_i) \in \prod_I \mc{A}_i | \lim_{i \rightarrow \mc{U}} ||a_i|| = 0\right\}$.  Then the \emph{ultraproduct $C^*$-algebra} $\prod_\mc{U} \mc{A}_i$ is given by \[\prod_\mc{U} \mc{A}_i = (\prod_I \mc{A}_i)/N_\mc{U}.\]  If $\mc{A}_i = \mc{A}$ for every $i \in I$, we write $\prod_\mc{U} \mc{A} = \mc{A}_\mc{U}$ and call it the \emph{ultrapower} of $\mc{A}$.

\begin{exmpl}\label{ultraexample}
Let $G, G'$ be two discrete groups acting on a compact Hausdorff space $X$ via amenable actions $\alpha$ and $\alpha'$ respectively.  Let $h: G \rightarrow C(X)$ and $h': G' \rightarrow C(X)$ be positive definite multipliers with respect to the respective induced actions on $C(X)$.  Since $\alpha$ is amenable, there exists a sequence of continuous maps $m_i: X \rightarrow \text{Prob}(G)$ such that for every $\ds g \in G, \lim_{i\rightarrow \infty} \Big( \sup_{x \in X} ||g.m_i^x - m_i^{\alpha_g(x)}||_1\Big) = 0$.  Consider the positive definite multiplier on $h'_i: G' \rightarrow C(X)$ given by \[(h'_i)_s(x) = \sum_{g \in G} m_i^x(g) h'_s(\alpha_{g^{-1}}(x))\] for $s \in G'$. Let $\mc{U}$ be a free ultrafilter on $\mb{N}$ and consider $(h'_i)_\mc{U}: G' \rightarrow C(X)_\mc{U}$  given by $((h'_i)_\mc{U})_s = ((h'_i)_s)_\mc{U}$.  Clearly $(h'_i)_\mc{U}$ is a positive definite multiplier with respect to the \tql diagonal\tqr action $\hat{\alpha}'_\mc{U}$ given by $\hat{\alpha}'_\mc{U}((f_i)_\mc{U}) = (\hat{\alpha}'(f_i))_\mc{U}$.  We claim that $(h'_i)_\mc{U}$ is invariant under the similarly defined diagonal action $\hat{\alpha}_\mc{U}$.  Fix $a \in G, s \in G', x \in X,$ and $i \in \mb{N}$. Then we have
\begin{align*}
& \Big|\hat{\alpha}_a(h'_i)_s(x) - (h'_i)_s(x)\Big| \\
&= \Big|(h'_i)_s(\alpha_{a^{-1}}(x)) - (h'_i)_s(x)\Big| \\
& = \Big|\sum_{g \in G} m_i^{\alpha_{a^{-1}}(x)}(g) h'_s(\alpha_{g^{-1}}(\alpha_{a^{-1}}(x))) - \sum_{g \in G} m_i^x(g) h'_s(\alpha_{g^{-1}}(x))\Big|\\
& = \Big|\sum_{g \in G} m_i^{\alpha_{a^{-1}}(x)}(g) h'_s(\alpha_{g^{-1}a^{-1}}(x)) - \sum_{g \in G} m_i^x(ag) h'_s(\alpha_{g^{-1}a^{-1}}(x))\Big|\\
& = \Big|\sum_{g \in G} m_i^{\alpha_{a^{-1}}(x)}(g) h'_s(\alpha_{g^{-1}a^{-1}}(x)) - \sum_{g \in G} (a^{-1}).m_i^x(g) h'_s(\alpha_{g^{-1}a^{-1}}(x))\Big|\\
& = \Big|\sum_{g \in G} (m_i^{\alpha_{a^{-1}}(x)}(g) -(a^{-1}).m_i^x(g)) h'_s(\alpha_{g^{-1}a^{-1}}(x))\Big|\\
& \leq \sum_{g \in G} \Big|m_i^{\alpha_{a^{-1}}(x)}(g) -(a^{-1}).m_i^x(g)\Big| \cdot ||h'_s||\\
& \leq \Big(\sup_{x \in X} \Big|\Big|m_i^{\alpha_{a^{-1}}(x)}(g) -(a^{-1}).m_i^x(g)\Big|\Big|_1\Big)\cdot ||h'_s||
\end{align*}
Therefore, $((\hat{\alpha})_\mc{U})_a(((h'_i)_\mc{U})_s) = ((h'_i)_\mc{U})_s$ for every $a \in G, s \in G'$.  By a symmetric argument we get that $(h_i)_\mc{U}$ is invariant under the diagonal action $\hat{\alpha}'_\mc{U}$.  Thus, the actions $\hat{\alpha}_\mc{U}$ and $\hat{\alpha}'_\mc{U}$ commute and the corresponding positive definite multipliers $(h_i)_\mc{U}$ and $(h'_i)_\mc{U}$ commute. 
\end{exmpl}

\noin Note that in the crossed product algebra $(\bigstar_\Gamma G_v) \rtimes_{r, \bigstar_\Gamma \alpha_v} \mc{A}$, if $\left\{h_v: G_v \rightarrow \mc{Z}(\mc{A})\right\}_{v \in V}$ is a collection of unital positive definite multipliers that commute according to $\Gamma$, then whenever $(v,w) \in E$ and $a \in G_v, b \in G_w$, we have \[\lambda_{v,a} h_{w,b} = h_{w,b} \lambda_{v,a}.\]
We are now ready to define the graph product of positive definite multipliers.

\begin{dfn}\label{gppdm}
Let $\mc{A}$ be a unital $C^*$-algebra, and fix a simplicial graph $\Gamma= (V, E)$.  For each $v \in V$, let $G_v$ be a group, and let $\alpha_v: G_v \car \mc{A}$ be an action. For each $v \in V$, let $h_v: G_v \rightarrow \mc{Z}(\mc{A})$ be a unital positive definite multiplier with respect to the action $\alpha_v: G_v \car \mc{A}$.  Suppose that the actions $\left\{\alpha_v\right\}_{v\in V}$ and positive definite multipliers $\left\{h_v\right\}_{v \in V}$ commute according to $\Gamma$. Let $\bigstar_\Gamma h_v: \bigstar_\Gamma G_v \rightarrow \mc{Z}(\mc{A})$ denote the graph product of the positive definite multipliers defined as follows. Given a reduced word $s = s_1\cdots s_n \in \bigstar_\Gamma G_v$ with $s_j \in G_{v_j}$ for $1 \leq j \leq n$, put \[(\bigstar_\Gamma h_v)_s = (\bigstar_\Gamma h_v)_{s_1\cdots s_n} = \alpha_{p_1}^{-1}(h_{v_1,s_1}) \cdots \alpha_{p_{n-1}}^{-1}(h_{v_{n-1},s_{n-1}}) h_{v_n, s_n}\] where $p_j = s_{j+1}\cdots s_{n}$ for $1 \leq j \leq n-1$ and $\alpha$ denotes the graph product action $\bigstar_\Gamma \alpha_v$. 
\end{dfn}

\begin{exmpl}\label{fex}
To illustrate where this definition comes from, consider the free product case (i.e. $\Gamma$ has no edges).  For each $v \in V,$ the multiplier $h_v$ corresponds with a bounded $\mc{A}$-bimodule map $\Phi_v: G_v \ltimes_{\alpha,r} \mc{A} \rightarrow G_v \ltimes_{\alpha,r} \mc{A}$ such that $\Phi_v(\lambda_s) = \lambda_sh_v(s)$ for $s \in G_v$.  Thus, the free product multiplier $* h_v$ should correspond with the amalgamated free product $\mc{A}$-bimodule map $*_\mc{A} \Phi_v: (*G_v) \ltimes_{*\alpha_v, r} \mc{A} \rightarrow (*G_v) \ltimes_{*\alpha_v, r} \mc{A}$ so that, for $1 \leq i \leq n, s_i \in G_{v_i}, v_i \neq v_{i+1}$,  
\begin{align*}
*_\mc{A} \Phi_v (\lambda_{s_1\cdots s_n}) &= *_\mc{A} \Phi_v (\lambda_{s_1}\cdots \lambda_{s_n})\\
&=\Phi_{v_1}(\lambda_{s_1}) \cdots \Phi_{v_n}(\lambda_{s_n})\\
& = \lambda_{s_1}h_{v_1,s_1} \cdots \lambda_{s_n}h_{v_n,s_n}\\
& = \lambda_{s_1}\cdots \lambda_{s_n} \alpha_{p_1}^{-1}(h_{v_1,s_1}) \cdots \alpha_{p_{n-1}}^{-1}(h_{v_{n-1},s_{n-1}})h_{v_n,s_n}
\end{align*} where $p_j = s_{j+1}\cdots s_n$ for $1 \leq j \leq n-1$.  
\end{exmpl}

To justify the presence of the commuting condition for multipliers in the above construction, consider the following example.

\begin{exmpl}\hspace*{\fill}
\begin{enumerate}
\item Let $G_1, G_2$ be two discrete groups, and let $\mc{A}$ be a unital $C^*$-algebra on which $G_1$ and $G_2$ act via $\alpha_i: G_i \car \mc{A}, i = 1,2$. Suppose that $\alpha_1$ and $\alpha_2$ commute as described above.  Let $h_i: G_i \rightarrow \mc{Z}(\mc{A})$ be a unital positive definite multiplier for $i = 1,2$ where $h_2(g) = 1_\mc{A}$ for every $g \in G_2$.  We wish to form a positive definite multiplier on $G_1\times G_2$ from the component multipliers $h_1$ and $h_2$.  Following the example of the free product, for $g_i \in G_i, i = 1,2$ we consider the following definition. \[(h_1\times h_2)_{(g_1,g_2)} := \alpha_{2,g_2}^{-1}(h_{1, g_1})1_\mc{A}\]  But since $(g_1,e)$ and $(e, g_2)$ commute, the necessity of a well-defined direct product of multipliers demands that simultaneously, we have \[(h_1\times h_2)_{(g_1,g_2)} := \alpha_{1,g_1}^{-1}(1_\mc{A})h_{1,g_1} = h_{1,g_1}.\]  Thus, from this perspective we must have that $\alpha_{2,g_2}(h_{1,g_1}) = h_{1,g_1}$ for all $g_i \in G_i, i =1,2$.

\item Taking an alternative approach, we might consider simply taking a pointwise product of multipliers (cf. Lemma 2.6 of \cite{bedcon}) with no regard for the actions.  Consider the same groups $G_1, G_2$, $C^*$-algebra $\mc{A}$, actions $\alpha_1, \alpha_2$, and multipliers $h_1$, $h_2 \equiv 1_\mc{A}$ as in part (1) of this example.  If we define the direct product of multipliers $(h_1\times h_2)$ as \[(h_1\times h_2)_{(g_1,g_2)}:= h_{1,g_1}h_{2,g_2},\] then there is the expectation that the matrix \[\left[\begin{matrix} 1_\mc{A} & \alpha_{2,g_2}(h_{1,g_1^{-1}})\\ \alpha_{1,g_1}(h_{1,g_1}) & 1_\mc{A}\end{matrix}\right]\] is positive.  Thus it is necessary that $ \alpha_{2,g_2}(h_{1,g_1^{-1}}) = \alpha_{1,g_1}(h_{1,g_1}^*) = h_{1,g_1^{-1}}$, and once again we are required to assume that $\alpha_{2,g_2}(h_{1,g_1}) = h_{1,g_1}$ for all $g_i \in G_i, i =1,2$. Once this assumption is in place, the pointwise product of the two (commuting) multipliers is exactly the product described in Definition \ref{gppdm} for this case.
\end{enumerate}
\end{exmpl}


\begin{rmk}
Suppose $\left\{\alpha_v: G_v \car \mc{A}\right\}$ and $\left\{h_v: G_v \rightarrow \mc{Z}(\mc{A})\right\}$ are actions and unital positive definite multipliers (respectively) that commute according to the simplicial graph $\Gamma = (V,E)$.  We see that if $(w,w') \in E$ and $a \in G_w, b \in G_{w'}$, then we have \[(\bigstar_\Gamma h_v)_{ab} = h_{w,a}h_{w',b} = h_{w',b}h_{w,a} = (\bigstar_\Gamma h_v)_{ba}.\]  So by applying this reasoning to a direct inductive argument, we obtain the following fact.
\end{rmk}
\begin{prop}
The map $\bigstar_\Gamma h_v$ is well-defined.
\end{prop}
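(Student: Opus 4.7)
By Proposition \ref{reducedlemma}(2), any two reduced representations of the same element of $\bigstar_\Gamma G_v$ differ by a permutation that factors as a finite sequence of transpositions of adjacent letters $s_i, s_{i+1}$ with $(v_i,v_{i+1}) \in E$. Hence it suffices to check that the formula in Definition \ref{gppdm} is invariant under one such transposition.

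So fix a reduced word $s = s_1\cdots s_n$ with $s_j \in G_{v_j}$, fix an index $i$ with $(v_i,v_{i+1}) \in E$, and let $s'_1 \cdots s'_n$ be the same product after transposing positions $i$ and $i+1$, so $s'_i = s_{i+1}$, $s'_{i+1} = s_i$, $v'_i = v_{i+1}$, $v'_{i+1} = v_i$, and all other entries agree. Denote the suffixes by $p_j = s_{j+1}\cdots s_n$ and $p'_j = s'_{j+1}\cdots s'_n$. Then $p_j = p'_j$ as group elements for all $j \neq i$: for $j \geq i+1$ this is immediate, and for $j \leq i-1$ one uses that $s_i s_{i+1} = s_{i+1} s_i$ in $\bigstar_\Gamma G_v$ (since $(v_i,v_{i+1}) \in E$). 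In particular the corresponding factors $\alpha_{p_j}^{-1}(h_{v_j,s_j})$ agree outside of positions $i$ and $i+1$.

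It remains to compare the two middle pairs
\[\alpha_{p_i}^{-1}(h_{v_i,s_i}) \cdot \alpha_{p_{i+1}}^{-1}(h_{v_{i+1},s_{i+1}}) \quad \text{and} \quad \alpha_{p'_i}^{-1}(h_{v_{i+1},s_{i+1}}) \cdot \alpha_{p_{i+1}}^{-1}(h_{v_i,s_i}).\]
Here $p_i = s_{i+1}p_{i+1}$ and $p'_i = s_i p_{i+1}$. Applying the multiplier commuting condition (since $(v_i,v_{i+1}) \in E$) gives $\alpha_{s_{i+1}}^{-1}(h_{v_i,s_i}) = h_{v_i,s_i}$ and $\alpha_{s_i}^{-1}(h_{v_{i+1},s_{i+1}}) = h_{v_{i+1},s_{i+1}}$, so both pairs simplify to products of $\alpha_{p_{i+1}}^{-1}(h_{v_i,s_i})$ and $\alpha_{p_{i+1}}^{-1}(h_{v_{i+1},s_{i+1}})$, in opposite orders. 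Since each $h_{v,g}$ takes values in $\mc{Z}(\mc{A})$ and $\alpha_{p_{i+1}}^{-1}$ is an automorphism of $\mc{A}$ (hence preserves the center), both factors lie in $\mc{Z}(\mc{A})$ and so commute. Thus the two products coincide.

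The plan is therefore a straightforward bookkeeping argument: reduce to a single adjacent transposition via Proposition \ref{reducedlemma}, verify suffix-invariance outside positions $i$ and $i+1$, and then use the multiplier commuting condition together with centrality to swap the two affected factors. The only delicate point — and the part that justifies the hypotheses of Definition \ref{multcom} — is the simultaneous use of $\alpha_w(h_v) = h_v$ and $h_v \in \mc{Z}(\mc{A})$ at the swap step; without either hypothesis the two expressions would not in general agree.
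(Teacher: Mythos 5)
Your proof is correct and takes essentially the same approach as the paper: the paper verifies the two-letter case $(\bigstar_\Gamma h_v)_{ab}=h_{w,a}h_{w',b}=h_{w',b}h_{w,a}$ using the commuting condition plus centrality and then appeals to "a direct inductive argument," which is exactly the adjacent-transposition bookkeeping you carry out in full. Your version is simply a more detailed writing-out of that sketch, and the point you flag at the end — that both $\alpha_w(h_v)=h_v$ and $h_v\in\mc{Z}(\mc{A})$ are needed at the swap — is precisely the content of the paper's two-letter computation.
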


\section{Results}\label{mr}

The first theorem of this section is a $C^*$-dynamical system version of the main result of \cite{gpucp}.  The proof here proceeds mostly mutatis mutandis to the argument in \cite{gpucp}. For the sake of exposition and completeness, we have included the proof of Theorem \ref{posdef} in \S \ref{appb}.
\begin{thm}\label{posdef}
Fix a simplicial graph $\Gamma = (V,E)$ and a unital $C^*$-algebra $\mc{A}$.  For each $v \in V$, let $G_v$ be a group, $\alpha_v: G_v \car \mc{A}$ be an action, and $h_v: G_v \rightarrow \mc{Z}(\mc{A})$ be a corresponding unital positive definite multiplier.  Suppose that $\left\{\alpha_v\right\}_{v \in V}$ and $\left\{h_v \right\}_{v \in V}$ commute with respect to $\Gamma$.  Then $\bigstar_\Gamma h_v$ is a positive definite multiplier for the action $\bigstar_\Gamma \alpha_v: \bigstar_\Gamma G_v \car \mc{A}$.
\end{thm}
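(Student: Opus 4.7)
The plan is to realize $\bigstar_\Gamma h_v$ as a matrix coefficient of an $\alpha$-equivariant isometric representation of $\bigstar_\Gamma G_v$ on a ``graph product Fock'' Hilbert $\mc{A}$-module, modeled on Boca's free-product construction \cite{boca} and its graph-product refinement in \cite{gpucp}. For each $v \in V$, apply the GNS dilation of Proposition \ref{gns} to $h_v$ to obtain a Hilbert $\mc{A}$-module $X_v$, an $\alpha_v$-equivariant isometric action $u_v: G_v \rightarrow \mc{I}(X_v)$, and a unit vector $\xi_v \in X_v$ with $\langle u_{v,s}\xi_v \mid \xi_v\rangle = h_{v,s}$. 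Split $X_v = \mc{A}\xi_v \oplus X_v^\circ$ via the orthogonal complement of the cyclic $\mc{A}$-submodule generated by $\xi_v$, producing a block decomposition of each $u_{v,s}$ into scalar, creation, annihilation, and compression parts.

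Assemble these data into a graph product Fock $\mc{A}$-module
\[ X \;=\; \mc{A}\Omega \;\oplus\; \bigoplus_{[\mbf{v}]} X_{v_1}^\circ \otimes_\mc{A} \cdots \otimes_\mc{A} X_{v_n}^\circ, \]
where $[\mbf{v}]$ ranges over $\Gamma$-equivalence classes of nonempty reduced words from Proposition \ref{reducedlemma}; the $\mc{Z}(\mc{A})$-valuedness of the $h_v$ together with the commuting hypotheses provides canonical identifications between tensor summands for different representatives of the same class. For each $v \in V$ and $g \in G_v$, define $\tilde{u}_{v,g} \in \mc{I}(X)$ combinatorially: on a pure tensor indexed by a reduced word $(v_1,\ldots,v_n)$, first commute the prospective new letter $v$ past leading factors whose vertices are adjacent to $v$ in $\Gamma$; if the next factor is indexed by $v$, apply $u_{v,g}$ in block form (redistributing the scalar contributions across later slots via the appropriate $\alpha$-automorphisms, modeled on Example \ref{fex}); otherwise, prepend a new $X_v^\circ$-factor using $u_{v,g}\xi_v$.

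The critical step is to verify that whenever $(v,w) \in E$, the operators $\tilde{u}_{v,g}$ and $\tilde{u}_{w,h}$ commute on $X$ for all $g \in G_v$ and $h \in G_w$. This is exactly where the multiplier commutativity hypothesis (Definition \ref{multcom}) is indispensable: the scalar block $h_{v,g}$ appearing inside $\tilde{u}_{v,g}$ must be fixed by $\alpha_{w,h}$ so that it passes transparently through $\tilde{u}_{w,h}$, and symmetrically for $h_{w,h}$. With edge-commutation in hand, the universal property of $\bigstar_\Gamma G_v$ produces an $\alpha$-equivariant isometric representation $\tilde{u}: \bigstar_\Gamma G_v \rightarrow \mc{I}(X)$; an induction on word length then yields $\langle \tilde{u}_s\Omega \mid \Omega\rangle = (\bigstar_\Gamma h_v)_s$ for every reduced $s$, matching Definition \ref{gppdm} because the $\alpha_{p_j}^{-1}$ factors there track exactly how the scalar contributions are pulled past later tensor slots. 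Positive definiteness of $\bigstar_\Gamma h_v$ then follows from the standard Gram matrix computation for matrix coefficients of the $\alpha$-equivariant action $\tilde{u}$.

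The main obstacle is the edge-commutation verification: unfolding $\tilde{u}_{v,g}\tilde{u}_{w,h}$ and $\tilde{u}_{w,h}\tilde{u}_{v,g}$ on each tensor summand branches into several cases depending on whether leading $v$- or $w$-factors are present, and these branches must be matched term by term. This is the combinatorial core shared with \cite{gpucp}, where the reduced-word machinery of Proposition \ref{reducedlemma}, combined with the commuting hypotheses on both $\left\{\alpha_v\right\}$ and $\left\{h_v\right\}$, is precisely tailored to make the bookkeeping go through. A secondary technical subtlety is the well-definedness of the scalar redistribution across the balanced tensor product, which relies on the $\mc{Z}(\mc{A})$-valuedness of each $h_v$ to move scalar factors freely across $\otimes_\mc{A}$-signs.
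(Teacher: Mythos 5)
Your overall strategy --- dilate each $h_v$ to an $\alpha_v$-equivariant isometric action on a Hilbert $\mc{A}$-module and realize $\bigstar_\Gamma h_v$ as a matrix coefficient on a graph-product Fock module --- is genuinely different from what the paper does, and if the construction could be carried out it would indeed yield positive definiteness (the matrix-coefficient argument at the end is standard). But as written there is a real gap at the foundation of the construction, namely the ``canonical identifications between tensor summands for different representatives of the same class.'' For $(v,w)\in E$ you need an isometric flip $X_v^\circ\otimes_{\mc{A}}X_w^\circ\to X_w^\circ\otimes_{\mc{A}}X_v^\circ$. The balanced tensor product imposes the relation $\eta\cdot a\otimes\zeta=\eta\otimes a\cdot\zeta$ for all $a\in\mc{A}$, and the naive flip sends these two representatives of the same vector to $\zeta\otimes\eta\cdot a$ and $a\cdot\zeta\otimes\eta$ respectively, which differ unless the left and right $\mc{A}$-actions can be interchanged. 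The $\mc{Z}(\mc{A})$-valuedness of $h_v$ only makes the inner products of the distinguished generators $u_{v,s}\xi_v$ central; $X_v^\circ$ is closed under the right $\mc{A}$-action, so generic inner products $\langle u_{v,s}\xi_v\cdot a\,|\,u_{v,t}\xi_v\cdot b\rangle$ are not central and the flip is neither well defined nor isometric on the whole module. This is essentially the obstruction the paper isolates in \S\ref{ob}: once you leave the concrete crossed-product picture, there is no reliable decomposition on which to impose the commuting relations, and a graph product of the dilations (equivalently, of the associated completely positive $\mc{A}$-bimodule maps) is not available. Relatedly, the edge-commutation of $\tilde{u}_{v,g}$ and $\tilde{u}_{w,h}$ --- which you correctly identify as the critical step and which is the entire content of the theorem --- is only asserted to ``go through''; it is precisely the step that breaks when the underlying module identifications are not in place.

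For contrast, the paper never builds a global dilation. It works directly with the kernel $K(x,y)=\alpha_y(h_{x^{-1}y})$ on $\bigstar_\Gamma G_v$ and proves positivity of the Gram matrices $[K(x_i,x_j)]$ by induction on finite \emph{complete} sets of reduced words, using the standard form $\mbf{x}=\mbf{y}\mbf{c}(v_0)\mbf{b}$ and the non-commutative length to organize the induction, a factorization lemma for cross terms (Lemma \ref{X1crossterms}), and a Schwarz-type inequality (Proposition \ref{schwarz}) whose proof uses only a \emph{local} Stinespring construction attached to one complete set at a time --- thereby sidestepping the global Fock-module issues entirely. If you want to salvage your approach, you would need to replace the abstract balanced tensor products by a concrete realization inside $\ell_2(\bigstar_\Gamma G_v)\otimes\mc{H}$ (where the identifications come for free from the group structure) and then carry out the case analysis for edge-commutation in full; at that point the combinatorial work is comparable to the paper's, just relocated.
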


In \S4 of \cite{donrua}, Dong-Ruan discuss the Haagerup property for left transformation groupoids.  In particular, given a group $G$, a compact Hausdorff space $X$, and an action $\alpha: G \car X$, one can form the left transformation groupoid $G \rtimes X$ (cf. \cite{donrua}). There is a 1-1 correspondence between positive definite multipliers $h: G \rightarrow C(X)$ and positive definite functions $\tilde{h}: G \rtimes X \rightarrow \mb{C}$ (cf. \cite{tu}) given by $\tilde{h}(s,x) = h_s(x), s \in G, x \in X$. Given a compact Hausdorff space $X$, groups $\left\{G_v\right\}_{v\in V}$, and actions $\left\{\alpha_v: G_v \car X\right\}_{v \in V}$ (or equivalently $\alpha_v: G \car C(X)$) that commute according to the simplicial graph $\Gamma = (V,E)$, then we can form the graph product transformation groupoid $(\bigstar_\Gamma G_v) \rtimes X$.   We say that the collection of unital positive definite functions $\left\{\tilde{h}: G_v \rtimes X \rightarrow \mb{C}\right\}_{v \in V}$ \emph{commute according to $\Gamma$} if the corresponding unital positive definite multipliers $\left\{h_v: G_v \rightarrow C(X)\right\}_{v \in V}$ commute according to $\Gamma$. In case $\left\{\alpha_v\right\}_{v\in V}$ and $\left\{\tilde{h}_v\right\}_{v\in V}$ commute according to $\Gamma$, we can define $\bigstar_\Gamma \tilde{h}_v: \bigstar_\Gamma G_v \rtimes X \rightarrow \mb{C}$ by \[(\bigstar_\Gamma \tilde{h}_v)(s,x) = (\bigstar_\Gamma h_v)_s(x)\] for $s\in \bigstar_\Gamma G_v$ and $x \in X$. In this setting, we have the following corollary to Theorem \ref{posdef}.

\begin{cor}
Fix a simplicial graph $\Gamma = (V,E)$ and a compact Hausdorff space $X$.  For each $v \in V$, let $G_v$ be a group, $\alpha_v: G_v \car X$ be an action, and $\tilde{h}_v: G_v\rtimes X\rightarrow \mb{C}$ be a corresponding unital positive definite function.  Suppose that $\left\{\alpha_v\right\}_{v \in V}$ and $\left\{\tilde{h}_v \right\}_{v \in V}$ commute with respect to $\Gamma$.  Then $\bigstar_\Gamma \tilde{h}_v$ is positive definite.
\end{cor}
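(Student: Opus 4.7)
The plan is to reduce the corollary to Theorem \ref{posdef} via the 1-1 correspondence, attributed to Tu \cite{tu}, between unital positive definite multipliers $h: G \rightarrow C(X)$ and unital positive definite functions $\tilde{h}: G \rtimes X \rightarrow \mb{C}$ given by $\tilde{h}(s,x) = h_s(x)$. The key observation is that $C(X)$ is commutative, so $\mc{Z}(C(X)) = C(X)$, and hence the $h_v$ arising from the $\tilde{h}_v$ under this correspondence take values in the center of $C(X)$ as required by Definition \ref{pdmul}.

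First, I would translate the hypothesis: the $\tilde{h}_v$ commute according to $\Gamma$ by definition iff the corresponding multipliers $h_v: G_v \rightarrow C(X)$ commute according to $\Gamma$ in the sense of Definition \ref{multcom}. The actions $\alpha_v: G_v \car X$ are given to commute according to $\Gamma$, which is equivalent to saying that the induced actions $\hat{\alpha}_v: G_v \car C(X)$ commute according to $\Gamma$. So the hypotheses of Theorem \ref{posdef} are satisfied with $\mc{A} = C(X)$.

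Next, apply Theorem \ref{posdef} to conclude that $\bigstar_\Gamma h_v: \bigstar_\Gamma G_v \rightarrow C(X)$ is a unital positive definite multiplier with respect to the graph product action $\bigstar_\Gamma \hat{\alpha}_v: \bigstar_\Gamma G_v \car C(X)$. By the definition of $\bigstar_\Gamma \tilde{h}_v$ stated just before the corollary, we have $(\bigstar_\Gamma \tilde{h}_v)(s,x) = (\bigstar_\Gamma h_v)_s(x)$ for $s \in \bigstar_\Gamma G_v, x \in X$, so $\bigstar_\Gamma \tilde{h}_v$ is precisely the function on $(\bigstar_\Gamma G_v) \rtimes X$ corresponding to the multiplier $\bigstar_\Gamma h_v$ under Tu's correspondence. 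Applying the correspondence in the reverse direction yields that $\bigstar_\Gamma \tilde{h}_v$ is positive definite on $(\bigstar_\Gamma G_v) \rtimes X$.

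There is no real obstacle here; all the substantive content is contained in Theorem \ref{posdef} and the dictionary between $C^*$-dynamical multipliers with commutative coefficients and groupoid positive definite functions. The only thing to check carefully is that the definition of $\bigstar_\Gamma \tilde{h}_v$ given in the corollary's setup matches the function corresponding to $\bigstar_\Gamma h_v$ under Tu's correspondence, but this holds by construction.
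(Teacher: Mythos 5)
Your proposal is correct and matches the paper's (implicit) argument exactly: the corollary is stated as an immediate consequence of Theorem \ref{posdef} via Tu's correspondence between $C(X)$-valued multipliers and positive definite functions on the transformation groupoid, with the definition of $\bigstar_\Gamma \tilde{h}_v$ arranged precisely so that it corresponds to $\bigstar_\Gamma h_v$. Nothing further is needed.
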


We now apply Theorem \ref{posdef} to prove the following theorem.

\begin{thm}\label{hp}
For each $v \in V$, let $G_v$ be a discrete group with action $\alpha_v: G_v \car \mc{A}$ such that the actions $\left\{\alpha_v\right\}$ commute according to $\Gamma$. If $\alpha_v$ has the Haagerup property for each $v \in V$ and the witnessing multipliers commute according to $\Gamma$ then $\bigstar_\Gamma \alpha_v$ has the Haagerup property.
\end{thm}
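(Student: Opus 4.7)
The plan is to build an approximating sequence of positive definite multipliers for $\bigstar_\Gamma \alpha_v$ by forming, at each stage $n$, the graph product of $n$-th stage approximating multipliers drawn from each vertex. Theorem \ref{posdef} takes care of positive definiteness, so the substance of the proof is to verify pointwise convergence to $1_\mc{A}$ and the vanishing-at-infinity condition.

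First I would fix, for each $v \in V$, a sequence $(h_{v,n})_n \subset C_0(G_v, \mc{Z}(\mc{A}))$ of positive definite multipliers for $\alpha_v$ converging pointwise to $1_\mc{A}$, chosen (by hypothesis) so that the families $\{h_{v,n}\}_{v \in V}$ commute according to $\Gamma$. Applying Proposition \ref{unital} coordinate-wise, I would further arrange $h_{v,n,e} = 1_\mc{A}$ and $\|h_{v,n,s}\| \leq 1/2$ for all $s \neq e$; this normalization preserves the commuting condition, since both rescaling and replacing $h_{v,n}(e)$ by $1_\mc{A}$ are fixed by any $\alpha_w$. Setting $H_n := \bigstar_\Gamma h_{v,n}$, Theorem \ref{posdef} yields that $H_n$ is a unital positive definite multiplier for $\bigstar_\Gamma \alpha_v$. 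Pointwise convergence $H_n \to 1_\mc{A}$ is then immediate from Definition \ref{gppdm}: for each fixed reduced word $s = s_1 \cdots s_m$, the expression
$$H_n(s) = \alpha_{p_1}^{-1}(h_{v_1,n,s_1}) \cdots \alpha_{p_{m-1}}^{-1}(h_{v_{m-1},n,s_{m-1}}) h_{v_m,n,s_m}$$
is a product of a fixed number of factors, each of which tends to $1_\mc{A}$ in norm, with the automorphisms $\alpha_{p_j}^{-1}$ isometric.

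The step I expect to be the main obstacle is verifying $H_n \in C_0(\bigstar_\Gamma G_v, \mc{A})$. The key input is the uniform bound $\|h_{v,n,s}\| \leq 1/2$ on non-identity letters, yielding
$$\|H_n(s)\| \leq \prod_{j=1}^{|s|} \|h_{v_j,n,s_j}\| \leq 2^{-|s|}$$
for every reduced word $s$. Given $\epsilon > 0$, pick $L$ with $2^{-L} < \epsilon$; then $\|H_n(s)\| < \epsilon$ for every $s$ with $|s| > L$. For reduced words of length at most $L$, rearranging the same product inequality shows that $\|H_n(s)\| \geq \epsilon$ forces each $\|h_{v_j,n,s_j}\| \geq 2^{|s|-1}\epsilon \geq \epsilon$, so every letter $s_j$ lies in the finite set $F_{v_j,n,\epsilon} := \{g \in G_{v_j} : \|h_{v_j,n,g}\| \geq \epsilon\}$. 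Under the standard assumption that $V$ is finite, only finitely many reduced words of length $\leq L$ have letters drawn from the resulting finite union, completing the $C_0$ verification and hence the proof.
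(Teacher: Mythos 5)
Your proposal is correct and follows essentially the same route as the paper: normalize via Proposition \ref{unital} to get unital multipliers bounded by $\tfrac{1}{2}$ off the identity, invoke Theorem \ref{posdef} for positive definiteness, get pointwise convergence from the fixed finite product in Definition \ref{gppdm}, and verify $C_0$ by splitting into long words (controlled by the $2^{-|s|}$ bound) and short words (whose letters must lie in the finite level sets of each $h_{v,n}$). Your version just phrases the finite exceptional set in contrapositive form and adds the (correct) observation that the normalization preserves the commuting condition.
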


\begin{proof}
It suffices to show this for $|V| < \infty$. For each $v \in V$, let $\left\{h_{v,n}\right\} \in C_0(G_v, \mc{A})$ be a sequence of unital positive definite multipliers such that $\ds ||h_{v,s}|| \leq \frac{1}{2}$ for $s \in G_v \setminus \left\{e \right\}$ (cf. Proposition \ref{unital}).  We claim that $h_n:= \bigstar_\Gamma h_{v,n}$ vanishes at infinity for each $n \in \mb{N}$.  Let $\varepsilon >0$ be given.  For each $v \in V$ let $F_v \subset G_v$ be a finite subset for which $||h_{v,n,s}||< \varepsilon$ for every $s \in G_v \setminus F_v$ (so $e \in F_v$).  Let $K \in \mb{N}$ be such that $2^{-K} \leq \varepsilon$. Put  \[F= \left\{ s= s_1 \cdots s_m \in \bigstar_\Gamma G_v \text{ reduced: }  m \leq K, s_j \in F_{v_j}\right\}\cup\left\{e\right\}.\]  Then $F \subset \bigstar_\Gamma G_v$ is finite with $||h_{n,s}|| < \varepsilon$ for every $s \in \bigstar_\Gamma G_v \setminus F$.  Thus, $h_n \in C_0(\bigstar_\Gamma G_v, \mc{A})$ for every $n$ and $h_n \rightarrow 1_\mc{A}$ pointwise.
\end{proof}

If each $G_v$ has the classical Haagerup property, then we can take $\mc{A} = \mb{C}$ and all actions to be trivial.  All requisite commuting relations are trivially satisfied, and so Theorem \ref{hp} is a generalization of the main result of \cite{antdre}: the Haagerup property for groups is stable under taking graph products. It should be noted that the stability of the Haagerup property for groups under graph products can also be deduced directly from Corollary 2.33 of \cite{casfim} or Theorem 4.4 of \cite{gpucp}.

We again consider the notion of amenable group actions.  In addition to the definition given in Example \ref{ultraexample}, recall that the amenability of a group action $\alpha: G \car \mc{A}$ can also be characterized by the existence of a sequence of finitely supported positive definite multipliers $h_n: G \rightarrow \mc{Z}(\mc{A})$ converging pointwise to $1_\mc{A}$ (cf. \cite{ad,brownozawa,donrua}).  Thus amenable actions have the Haagerup property, and Theorem \ref{hp} immediately yields the following corollary.

\begin{cor}
For each $v \in V$, let $G_v$ be a discrete group with action $\alpha_v: G_v \car \mc{A}$ such that the actions $\left\{\alpha_v\right\}$ commute according to $\Gamma$. If $\alpha_v$ is amenable for each $v \in V$ and the witnessing multipliers commute according to $\Gamma$ then $\bigstar_\Gamma \alpha_v$ has the Haagerup property.
\end{cor}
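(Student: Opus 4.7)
The plan is to deduce this corollary as an immediate consequence of Theorem \ref{hp} via the alternative characterization of amenable group actions recalled in the paragraph preceding the statement.

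First I would invoke the fact that an action $\alpha_v : G_v \car \mc{A}$ is amenable if and only if there exists a sequence $\{h_{v,n}\}$ of finitely supported positive definite multipliers $h_{v,n} : G_v \to \mc{Z}(\mc{A})$ converging pointwise to $1_\mc{A}$. Since any finitely supported $\mc{Z}(\mc{A})$-valued function on $G_v$ automatically lies in $C_0(G_v, \mc{A})$, that same sequence witnesses the Haagerup property for $\alpha_v$. Hence amenability of each $\alpha_v$ already supplies the Haagerup witnesses required as input to Theorem \ref{hp}.

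Next, the commuting condition transports across without further work. By hypothesis the amenability-witnessing multipliers $\{h_{v,n}\}$ commute according to $\Gamma$, and these are the very multipliers we have just identified as Haagerup witnesses. Thus all hypotheses of Theorem \ref{hp} hold for the family $\{\alpha_v, h_{v,n}\}_{v \in V}$, and applying that theorem yields that $\bigstar_\Gamma \alpha_v$ has the Haagerup property.

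The essentially only subtle point is to confirm that the commuting hypothesis on amenability witnesses is exactly the hypothesis Theorem \ref{hp} imposes on Haagerup witnesses; since the two are literally the same sequence of multipliers, this is automatic. In particular, any incidental normalization as in Proposition \ref{unital} (rescaling by a scalar and resetting the value at the identity to $1_\mc{A}$) preserves the $\Gamma$-commuting condition, because $1_\mc{A}$ is fixed by every $\alpha_{v}$ and commutes with every central value of every other multiplier. I do not anticipate any real obstacle: the entire content of the corollary is the observation that amenability is a strong enough approximation property to feed directly into Theorem \ref{hp}.
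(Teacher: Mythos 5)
Your proposal is correct and follows essentially the same route as the paper: amenability of $\alpha_v$ yields finitely supported positive definite multipliers converging pointwise to $1_\mc{A}$, these automatically lie in $C_0(G_v,\mc{A})$ and hence witness the Haagerup property, and Theorem \ref{hp} then applies directly since the commuting hypothesis concerns the very same multipliers. Your additional observation that the normalization of Proposition \ref{unital} preserves the $\Gamma$-commuting condition is a reasonable extra check, though the paper treats the whole deduction as immediate.
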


%
%

Continuing the discussion on left transformation groupoids, the corresponding analog for Theorem \ref{hp} is as follows.

\begin{cor}
If for each $v \in V$, the left transformation groupoid $G_v \rtimes X$ has the Haagerup property and the corresponding actions and positive definite functions commute according to $\Gamma$, then $(\bigstar_\Gamma G_v)\rtimes X$ also has the Haagerup property.
\end{cor}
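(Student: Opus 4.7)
The plan is to shuttle the Haagerup property between the groupoid side and the action side via the bijective correspondence $\tilde{h}(s,x) = h_s(x)$ between unital positive definite multipliers $h: G \rightarrow C(X)$ and positive definite functions $\tilde{h}: G \rtimes X \rightarrow \mb{C}$, and then to invoke Theorem \ref{hp} on the action side. The whole proof is essentially a translation argument; no new analytic content is required beyond what Theorem \ref{hp} and the corollary to Theorem \ref{posdef} already supply.

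First, I would translate the hypothesis into the $C^*$-dynamical framework. For each $v \in V$, fix a sequence $\{\tilde{h}_{v,n}\}$ of $C_0$ positive definite functions on $G_v \rtimes X$ converging to $1$ pointwise, witnessing the Haagerup property of $G_v \rtimes X$. Under the correspondence, these produce unital positive definite multipliers $h_{v,n}: G_v \rightarrow C(X)$ with respect to the action $\alpha_v: G_v \car C(X)$. Since $G_v$ is discrete and $X$ is compact, a subset of $G_v \rtimes X \cong G_v \times X$ has compact closure if and only if it is contained in $F \times X$ for some finite $F \subset G_v$, so vanishing at infinity on the groupoid is equivalent to $h_{v,n} \in C_0(G_v, C(X))$. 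Thus each action $\alpha_v: G_v \car C(X)$ has the Haagerup property. The standing hypothesis that the $\tilde{h}_v$ commute according to $\Gamma$ is, by the definition immediately preceding the corollary, nothing other than the statement that the corresponding multipliers $h_v$ commute according to $\Gamma$, and with a minor routine check one sees that the specific witnessing families $\{h_{v,n}\}$ inherit this compatibility.

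Second, I would apply Theorem \ref{hp} to the family $\{\alpha_v: G_v \car C(X)\}$ with witnessing multipliers $\{h_{v,n}\}$, yielding the Haagerup property for $\bigstar_\Gamma \alpha_v: \bigstar_\Gamma G_v \car C(X)$. More precisely, the argument there produces explicit witnessing multipliers $H_n := \bigstar_\Gamma h_{v,n} \in C_0(\bigstar_\Gamma G_v, C(X))$ converging pointwise to $1_{C(X)}$. Finally, I would translate back: by the corollary to Theorem \ref{posdef}, the corresponding functions $\tilde{H}_n: (\bigstar_\Gamma G_v) \rtimes X \rightarrow \mb{C}$ are positive definite; applying the compact-$X$/discrete-$G$ observation above in reverse to $\bigstar_\Gamma G_v$, they vanish at infinity on $(\bigstar_\Gamma G_v) \rtimes X$ and converge pointwise to $1$, establishing the Haagerup property for the graph product groupoid.

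The only delicate point, and hence the closest thing to an obstacle, is ensuring that the $C_0$ condition matches on both sides of the correspondence; this reduces to the compactness of $X$, so it is immediate. Everything else is formal: the hypotheses for the groupoid corollary are phrased precisely so as to become the hypotheses of Theorem \ref{hp} after the $\tilde{h} \leftrightarrow h$ dictionary is invoked.
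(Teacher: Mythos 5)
Your proposal is correct and is essentially the argument the paper intends: the corollary is stated as the groupoid analog of Theorem \ref{hp}, obtained by passing through the dictionary $\tilde{h}(s,x)=h_s(x)$ between positive definite functions on $G\rtimes X$ and positive definite multipliers $G\to C(X)$, applying Theorem \ref{hp} (and the groupoid corollary of Theorem \ref{posdef}) on the $C^*$-dynamical side, and translating back, with the $C_0$ conditions matching because $X$ is compact. Your write-up just makes explicit the routine checks the paper leaves implicit.
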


\noin Moreover, due to Theorem 4.2 of \cite{donrua}, we immediately see that Theorem \ref{hp} implies the following corollary.  
\begin{cor}[\cite{dadgue}]\label{ce}
Coarse embeddability of a group into a Hilbert space is stable under graph products.
\end{cor}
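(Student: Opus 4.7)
The plan is to reduce the corollary to Theorem~\ref{hp} by means of a $C^*$-dynamical reformulation of coarse embeddability. Specifically, Theorem 4.2 of \cite{donrua} characterizes coarse embeddability of a discrete group $G$ into a Hilbert space in terms of the Haagerup property of a natural action of $G$ on a commutative unital $C^*$-algebra (one may take $\mc{A}_G = \ell^\infty(G) = C(\beta G)$ with the translation action). So coarse embeddability of $G$ is equivalent to the existence of an action of $G$ on some commutative unital $C^*$-algebra possessing the Haagerup property in the sense of this paper.

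Given groups $\{G_v\}_{v\in V}$ each coarsely embedding into a Hilbert space, I would first invoke the above characterization to pick, for each $v$, an action $\alpha_v: G_v \car \mc{A}_v$ on a commutative unital $C^*$-algebra with the Haagerup property, witnessed by positive definite multipliers $\{h_{v,n}\}_n$. To assemble these into a single $C^*$-dynamical system, set $\mc{A}=\bigotimes_{v\in V}\mc{A}_v$ (a commutative tensor product) and let each $G_v$ act on $\mc{A}$ through $\alpha_v$ on the $v$-th factor and trivially on the others. These actions all commute with one another — a condition strictly stronger than commuting according to $\Gamma$ — so by the universal property of the graph product of groups we obtain the graph product action $\bigstar_\Gamma \alpha_v : \bigstar_\Gamma G_v \car \mc{A}$. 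The pulled-back multipliers $\tilde{h}_{v,n} = 1\otimes\cdots\otimes h_{v,n}\otimes\cdots\otimes 1$ still witness the Haagerup property of each component action and, as in Example~\ref{tensorexmpl}, automatically commute (in fact for every pair $v,w$), hence commute according to $\Gamma$.

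Theorem~\ref{hp} now applies and yields the Haagerup property for $\bigstar_\Gamma\alpha_v: \bigstar_\Gamma G_v \car \mc{A}$. Since $\mc{A}$ is commutative, the converse direction of Theorem 4.2 of \cite{donrua} gives that $\bigstar_\Gamma G_v$ coarsely embeds into a Hilbert space, completing the proof. The main potential obstacle is whether Theorem 4.2 of \cite{donrua} provides an if-and-only-if characterization that allows arbitrary commutative $C^*$-algebras, not just $\ell^\infty(G)$; if only the concrete version were available one would instead have to work with $\mc{A}=\ell^\infty(\bigstar_\Gamma G_v)$ and its translation action, transferring the component multipliers via pullback through the canonical $G_v$-equivariant quotients and arranging the commuting-according-to-$\Gamma$ condition through the ultraproduct averaging procedure of Example~\ref{ultraexample}. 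In the tensor product formulation, however, the commuting conditions are built in for free, so the corollary drops out immediately.
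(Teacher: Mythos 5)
Your argument is correct and rests on the same two ingredients as the paper's one-line proof --- Theorem \ref{hp} together with Dong--Ruan's Theorem 4.2 translating coarse embeddability into the Haagerup property of a commutative $C^*$-dynamical system --- but you realize the reduction differently. The paper's implicit construction works on the single algebra $\ell^\infty(\bigstar_\Gamma G_v)$ (equivalently the transformation groupoid over $\beta(\bigstar_\Gamma G_v)$), with each $G_v$ acting by restricting the translation action and the witnessing multipliers transported from $\ell^\infty(G_v)$ via the canonical retraction $\pi_v:\bigstar_\Gamma G_v\to G_v$; your tensor-product assembly $\mc{A}=\bigotimes_v\mc{A}_v$ is a clean alternative that makes the commuting-according-to-$\Gamma$ hypotheses for both the actions and the multipliers automatic, at the cost of landing on an algebra to which the literal statement of Theorem 4.2 of \cite{donrua} need not apply. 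You correctly flag this loose end, and it closes easily without the machinery you propose: given a Haagerup system $\bigstar_\Gamma\alpha_v:\bigstar_\Gamma G_v\car C(X)$ with multipliers $h_n$, evaluating the kernels $(s,t)\mapsto \alpha_t(h_{n,s^{-1}t})$ at any fixed point of $X$ produces scalar normalized positive definite kernels on $\bigstar_\Gamma G_v$ that are small off tubes and tend to $1$ on tubes, which is the standard kernel criterion for coarse embeddability. Two further remarks: on your fallback route the ultraproduct averaging of Example \ref{ultraexample} is unnecessary, since the pulled-back multipliers $s\mapsto h_{v,n,s}\circ\pi_v$ are automatically invariant under translation by $G_w$ for $w\neq v$ (because $\pi_v$ annihilates $G_w$), and the equivariant unital $*$-homomorphism $f\mapsto f\circ\pi_v$ preserves positive definiteness; and in either formulation the relevant algebras are nonseparable, which is harmless because Theorems \ref{posdef} and \ref{hp} do not use separability.
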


\noin We have attributed Corollary \ref{ce} to \cite{dadgue} because graph products can be perceived as amalgamated free products, and Dadarlat-Guentner showed in Theorem 5.1 of \cite{dadgue} that coarse embeddability is preserved under amalgamated free products.

\section{Obstructions to alternative and generalized approaches}\label{ob}

\subsection{Graph products as amalgams} As mentioned above, it is well known that graph products can be expressed as amalgamated free products--see Lemma 3.20 of \cite{green} or the \tql unscrewing technique\tqr in \cite{casfim}; so in situations in which the direct product and amalgamated free product cases are known, one can take care to arrange an argument in conjunction with this decomposition to prove the corresponding graph products version.  For example, the main result of \cite{gpucp} (graph products of unital completely positive maps are completely positive) can be obtained in this manner. Since the Haagerup property is not preserved under taking amalgamated free products in general, we have reason to avoid perceiving graph products as amalgams in this article (save for attributing Corollary \ref{ce} to \cite{dadgue}).


\subsection{Generalization to Hilbert $\mc{A}$-modules}

In \cite{donrua}, the Haagerup property for group actions is considered as an instance of the more general \emph{Hilbert $\mc{A}$-module Haagerup property}.  Thus, it is natural to ask if our treatment of graph products can be generalized to the context of Hilbert $\mc{A}$-modules.  The setting is as follows.

Let $\mc{A}$ be a unital $C^*$-subalgebra of a unital $C^*$-algebra $\mc{B}$ such that there exists a faithful conditional expectation $\mc{E}: \mc{B} \rightarrow \mc{A}$. The conditional expectation $\mc{E}$ gives rise to an $\mc{A}$-valued inner product $\langle x |y\rangle_\mc{E} = \mc{E}(y^*x)$ on $\mc{B}$, and thus we may consider $\mc{B}$ as a Hilbert $\mc{A}$-module.

\begin{dfn}[\cite{donrua}]
In the setting above, $\mc{B}$ has the \emph{Hilbert $\mc{A}$-module Haagerup property} with respect to $\mc{E}$ if there exists a sequence of completely positive $\mc{A}$-bimodule maps $\left\{\Phi_n \right\}$ on $\mc{B}$ such that 
\begin{itemize}
\item $\mc{E} \circ \Phi_n \leq \mc{E}$;
\item each $\Phi_n$ defines a compact $\mc{A}$-module map $\tilde{\Phi}_n$ on $\mc{H}_\mc{A}$--the appropriate completion of $\mc{A}$ under the inner product $\langle \cdot | \cdot \rangle_\mc{E}$;
\item $||\tilde{\Phi}_n(x) - x||_\mc{E} \rightarrow 0$ for all $x \in \mc{B}$.
\end{itemize}
\end{dfn}

As discussed in Example \ref{fex}, the definition of graph products of multipliers (Definition \ref{gppdm}) is inspired by the free case. Using amalgamated free products of completely positive maps, one can expect that the Hilbert $\mc{A}$-module Haagerup property is preserved under taking free products. Unfortunately, this approach breaks down when commuting relations are introduced in the general graph product setting. 

For the sake of illustration, let us return to the reduced crossed product setting.  Given a group $G$ acting on a $C^*$-algebra $\mc{A}$ with action $\alpha: G \car \mc{A}$ and corresponding positive definite multiplier $h: G \rightarrow \mc{Z}(\mc{A})$, one can obtain a completely positive $\mc{A}$-bimodule map  $\Phi_h: G \rtimes_{\alpha,r} \mc{A} \rightarrow G \rtimes_{\alpha,r} \mc{A}$ associated with $h$ by putting $\Phi_h(\lambda_s) = \lambda_sh_s$ for every $s \in G$ and extending $\mc{A}$-linearly.  This connection between $h$ and $\Phi_h$ provides the dictionary between the Haagerup property for group actions and the Hilbert $\mc{A}$-module Haagerup property.  Now fix a simplicial graph $\Gamma= (V,E)$. If we have groups $\left\{G_v\right\}_{v \in V}$ with actions $\left\{\alpha_v: G_v \car \mc{A}\right\}_{v \in V}$ and corresponding unital positive definite multipliers $\left\{h_v: G_v \rightarrow \mc{Z}(\mc{A})\right\}_{v \in V}$ that both commute according to $\Gamma$, then by Theorem \ref{posdef}, we can form the unital positive definite graph product multiplier $\bigstar_\Gamma h_v: \bigstar_\Gamma G_v \rightarrow \mc{Z}(\mc{A})$ which gives rise to the associated unital completely positive map $\Phi_{\bigstar_\Gamma h_v}: \bigstar_\Gamma G_v \rtimes_{\bigstar_\Gamma \alpha_v,r} \mc{A} \rightarrow \bigstar_\Gamma G_v \rtimes_{\bigstar_\Gamma \alpha_v,r} \mc{A}$.  One is tempted to view this map as  a graph product of the component maps (cf. \cite{gpucp}), evidenced by the following.  Given $s_{i} \in G_{v_i}$ for $1 \leq i \leq n$ with $s_1\cdots s_n \in \bigstar_\Gamma G_v$ reduced, we have 
\begin{align*}
\Phi_{\bigstar_\Gamma h_v}(\lambda_{s_1}\cdots \lambda_{s_n}) &= \Phi_{\bigstar_\Gamma h_v}(\lambda_{s_1\cdots s_n}) \\
&=\lambda_{s_1\cdots s_n} (\bigstar_\Gamma h_v)_{s_1\cdots s_n}\\
&=\Phi_{h_{v_1}}(\lambda_{s_1}) \cdots \Phi_{h_{v_n}}(\lambda_{s_n}).
\end{align*}
\noin A counterpoint to this line of reasoning is that this construction heavily depends on the structure of reduced crossed products.  In particular, the definition of $\Phi_h$ utilizes the fact that the elements in the dense subalgebra $C_c(G, \mc{A})$ can be uniquely expressed as $\sum \lambda_s a_s$ so that merely defining $\Phi_h$ on the $\lambda_s$ elements and extending $\mc{A}$-linearly is enough to give a well-defined map.  Furthermore, the requisite commuting relations to even define the graph product of multipliers also depends on this unique decomposition.  The condition that the multipliers $h_v$ commute according to $\Gamma$ only ensures that $\Phi_{h_v}(\lambda_s)\Phi_{h_w}(\lambda_t) = \Phi_{h_w}(\lambda_t)\Phi_{h_v}(\lambda_s)$ whenever $s \in G_v, t \in G_w$ and $(v,w) \in E$.  Evidently, given $s \in G_v, t \in G_w$ with $(v,w) \in E$ and $a,b \in \mc{A}$,
\begin{align*}
\Phi_v(\lambda_s a) \Phi_w(\lambda_t b) &= \lambda_s h_{v,s} a \lambda_t h_{w,t} b\\
&= \lambda_{st} h_{v,s}h_{w,t} \alpha_t^{-1} (a)b\\
&= \lambda_{st} (\bigstar_\Gamma h_v)_{st} \alpha_t^{-1} (a)b,\\
&\text{and}\\
\Phi_w(\lambda_t b) \Phi_v(\lambda_s a) &= \lambda_t h_{w,t} b \lambda_s h_{v,s} a\\
&= \lambda_{ts} h_{w,t} h_{v,s} \alpha_s^{-1} (b) a\\
&= \lambda_{st} (\bigstar_\Gamma h_v)_{st} \alpha_s^{-1} (b)a.
\end{align*}
Thus $\Phi_v(\lambda_s a)$ and $\Phi_w(\lambda_t b)$ do not commute--not even if $\mc{A}$ is commutative.  In fact, $\lambda_s a$ and $\lambda_t b$ do not commute within $(\bigstar_\Gamma G_v) \rtimes_{\bigstar_\Gamma \alpha_v,r} \mc{A}$. So neither the map $\Phi_{\bigstar_\Gamma h_v}$ nor the algebra $(\bigstar_\Gamma G_v) \rtimes_{\bigstar_\Gamma \alpha_v,r} \mc{A}$ should be construed as a(n amalgamated) graph product.

When we strip away the crossed product structure and consider the general Hilbert $\mc{A}$-module setting, we lose access to a reliable unique decomposition on which we would determine the commuting relations and the behavior of the component unital completely positive $\mc{A}$-bimodule maps $\Phi_v: \mc{B}_v \rightarrow \mc{B}_v$ rendering a graph product of such maps unavailable. 


\section{Proof of Theorem \ref{posdef}}\label{appb}

Given a group $G$, a $C^*$-algebra $\mc{A}$, an action $\alpha: G \car \mc{A}$, and an associated completely bounded multiplier $h: G \rightarrow \mc{Z}(\mc{A})$, we can form the kernel $K: G \times G \rightarrow \mc{Z}(\mc{A})$ given by \[K(x,y) = \alpha_y(h_{x^{-1}y}).\]  Clearly, we see that $h$ is a positive definite multiplier if and only if for any $n \in \mb{N}$ and $x_1,\dots, x_n \in G$ the matrix \[\left[K(x_i,x_j)\right]_{i,j}\] is positive.  In this section, we prove several technical results for the kernel associated to the graph product of unital positive definite multipliers with the goal of proving Theorem \ref{posdef}.  These results are similar to and inspired by the results of \S\S 3.1 and \S\S 3.2 in \cite{gpucp} and therefore those of \cite{boca}.

Fix a simplicial graph $\Gamma = (V,E)$ and a unital $C^*$-algebra $\mc{A}$.  For each $v \in V$, let $G_v$ be a group, $\alpha_v: G_v \car \mc{A}$ be an action, and $h_v: G_v \rightarrow \mc{Z}(\mc{A})$ be a corresponding unital positive definite multiplier.  Suppose that $\left\{\alpha_v\right\}_{v \in V}$ and $\left\{h_v \right\}_{v \in V}$ commute with respect to $\Gamma$.  To simplify notation, put
\begin{align*}
\alpha &:= \bigstar_\Gamma \alpha_v\\
&\text{and}\\
h &:= \bigstar_\Gamma h_v.
\end{align*}
Let $K: \bigstar_\Gamma G_v \times \bigstar_\Gamma G_v \rightarrow \mc{Z}(\mc{A})$ denote the kernel given by $K(x,y) = \alpha_y(h_{x^{-1}y})$.  At the end of this section, we prove that $h$ is positive definite.  First, some definitions are in order.

\begin{dfn}[\cite{boca,gpucp}]\label{complete}
A finite subset $X \subset \mc{W}_\text{red}$ is \emph{complete} if $1 \in X$ and whenever $x_1\cdots x_m \in X$ we have $x_{\sigma(2)}\cdots x_{\sigma(m)} \in X$ and  $x_{\sigma(1)} \cdots x_{\sigma(m-1)} \in X$ for every permutation $\sigma \in S_m$ such that $x_1 \cdots x_m = x_{\sigma(1)} \cdots x_{\sigma(m)}$.  In other words $X$ is complete if it contains the unit and is closed under left and right truncations of any equivalent rearrangements.  Let $\mbf{v}_X:= \left\{\mbf{v} \in \mc{W}_\text{red} | \mbf{v} = \mbf{v}_x \text{ for some } x \in X\right\}$.
\end{dfn}

We can place a partial order $\peq$ on $\mc{W}_\text{red} \cup \left\{ 1\right\}$ with respect to truncation as follows.  For every $x \in \mc{W}_\text{red}, 1 \peq x$; and given $x, y \in \mc{W}_\text{red}, y \peq x$ if either $x = y$ or $x$ truncates (as in Definition \ref{complete}) to $y$.  This order also applies to the words in $V$. Let $Y \subset \mc{W}_\text{red}\cup\left\{1\right\}$ be any finite nonempty subset.  Put \[Y^\peq := \left\{x \in \mc{W}_\text{red}\cup\left\{1\right\} | \exists y \in Y: x \peq y \right\}.\]  Clearly, $Y^\peq$ is complete.

\begin{dfn}[\cite{gpucp}]\label{nclength}
Fix $v_0 \in V$.  Let $\mbf{v} = (v_1,\dots,v_n,v_0)$ be reduced.  We let $\nci \mbf{v} \nci_{v_0}$ denote the \emph{(right-hand) non-commutative length of $\mbf{v}$ with respect to $v_0$}, given by \[\nci \mbf{v} \nci_{v_0} := \text{Card}\Big(\left\{i | 1\leq i \leq n, (v_i,v_0) \notin E\right\}\Big).\]  Note that this counts when $v_0$ is repeated. If $\mbf{v}$ cannot be written with $v_0$ at the right-hand end, put $\nci \mbf{v} \nci_{v_0} = -1$.  If $w \in \bigstar_\Gamma \mc{A}_v$ is reduced, let $\nci w \nci_{v_0} = \nci \mbf{v}_w\nci_{v_0}$.  Given a finite set $X$ of reduced words (of vertices or algebra elements), we define the \emph{(right-hand) non-commutative length of $X$ with respect to $v_0$}, denoted $\nci X \nci_{v_0}$ to be given by \[ \nci X \nci_{v_0} := \max_{w \in X} \nci w \nci_{v_0}.\]
\end{dfn}


\begin{dfn}[\cite{gpucp}]\label{stdform}
Fix $v_0 \in V$. Let $\mbf{x} \in \mc{W}_\text{red}$ be such that $v_0 \in \mbf{x}$. Suppose $\mbf{y},\mbf{c},\mbf{b}  \in \mc{W}_\text{red}$, satisfy the following properties. 
\begin{itemize}
	\item $\mbf{x} = \mbf{y}\mbf{c}(v_0)\mbf{b}$;
	\item $\mbf{b}$ is the word of smallest length so that $\mbf{y}\mbf{c}(v_0) \peq \mbf{x}$ and $\nci \mbf{y}\mbf{c}(v_0)\nci_{v_0} = \nci \left\{\mbf{x}\right\}^\peq \nci_{v_0}$;
	\item $\mbf{y}$ is the word of smallest length so that $\mbf{y}(v_0) \peq \mbf{x}$ and $\nci \mbf{y}(v_0) \nci_{v_0} = \nci \left\{\mbf{x}\right\}^\peq \nci_{v_0}$.
\end{itemize}
Then we say that $\mbf{x} = \mbf{y}\mbf{c}(v_0)\mbf{b}$ is in \emph{standard form with respect to $v_0$}.
We extend this definition to reduced words of algebra elements.
\end{dfn}

\begin{prop}[\cite{gpucp}]
If $\mbf{x} = \mbf{y}\mbf{c}(v_0)\mbf{b}$ is in standard form with respect to $v_0$, then the words $\mbf{y}, \mbf{c},$ and $\mbf{b}$ are unique.
\end{prop}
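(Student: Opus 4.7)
The plan is to pin down, step by step, a canonical combinatorial structure inside $\mbf{x}$ that forces $\mbf{b}$, then $\mbf{c}$, then $\mbf{y}$. Throughout I view the letters of the reduced word $\mbf{x}$ as the elements of a partial order whose linear extensions enumerate the equivalent reduced rearrangements of $\mbf{x}$: two letters at positions $i<j$ are declared comparable whenever they fail to commute in $\bigstar_\Gamma G_v$, and the order is the transitive closure. This viewpoint is essentially Proposition \ref{reducedlemma}.

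For the uniqueness of $\mbf{b}$: because $\Gamma$ is simplicial we have $(v_0, v_0) \notin E$, so any two occurrences of the letter $v_0$ in $\mbf{x}$ fail to commute and are therefore comparable in this order; hence the $v_0$-letters of $\mbf{x}$ form a chain, with a unique maximum $\ell^*$. Letting $H$ be the number of letters of $\mbf{x}$ strictly above $\ell^*$ in the order, the rightmost position $\ell^*$ can occupy in any equivalent reduced form is $p := n - H$, and the letters at positions $p+1,\dots, n$ are exactly those strictly above $\ell^*$, an intrinsic multiset. Proposition \ref{reducedlemma}(2) then fixes a canonical reduced word from this multiset, which must be $\mbf{b}$. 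One checks that choosing any non-maximal $v_0$ for the final letter of the truncation either enlarges $|\mbf{b}|$ or leaves a non-commuting letter (in particular another $v_0$) inside $\mbf{b}$, violating one of the two optimality conditions of Definition \ref{stdform}.

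For the uniqueness of $\mbf{c}$ and $\mbf{y}$: having fixed $\mbf{b}$ and the position $p$ of $\ell^*$, the multiset of letters in $\mbf{y}\mbf{c}$ is determined---these are the letters of $\mbf{x}$ strictly below or incomparable to $\ell^*$. The condition $\mbf{y}(v_0) \peq \mbf{x}$ in Definition \ref{stdform}, together with the decomposition $\mbf{x} = \mbf{y}\mbf{c}(v_0)\mbf{b}$, forces every letter of $\mbf{c}$ to commute with $v_0$, for only then can one rearrange $\mbf{y}\mbf{c}(v_0) \sim \mbf{y}(v_0)\mbf{c}$ so that $\mbf{c}$ is truncatable from the right. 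Minimizing $|\mbf{y}|$ is the same as maximizing $|\mbf{c}|$, and I would characterize the letters of $\mbf{c}$ as the unique maximum subset $S$ of the prefix satisfying (i) every element of $S$ commutes with $v_0$ and (ii) $S$ is upward-closed in the partial order restricted to the prefix. Uniqueness of this maximum is immediate because the union of two subsets satisfying (i) and (ii) again satisfies both. Proposition \ref{reducedlemma}(2) then provides canonical reduced words $\mbf{c}$ and $\mbf{y}$.

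The chief obstacle lies in the second step, specifically in justifying that upward-closedness (not merely commutativity with $v_0$) is the correct criterion. The word $\mbf{x} = (c, a, v_0)$, with $c$ commuting with $v_0$ but $a$ commuting with neither, illustrates why the naive rule ``commuting letters go into $\mbf{c}$, non-commuting letters into $\mbf{y}$'' is wrong: here $c$ cannot be swept past $a$, so $c$ must remain in $\mbf{y}$ even though it commutes with $v_0$. Once the upward-closedness characterization is in hand, the remaining verifications---that the proposed decomposition really equals $\mbf{x}$ and attains both minima in Definition \ref{stdform}---are routine poset bookkeeping.
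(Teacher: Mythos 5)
The paper itself offers no proof of this proposition---it is imported verbatim from \cite{gpucp}---so there is no in-text argument to compare against; judged on its own terms, your heap-of-pieces argument is sound and essentially complete, and it is a legitimately self-contained alternative to deferring to the cited source. Two points deserve tightening. First, Proposition \ref{reducedlemma}(2) does not single out a canonical reduced representative of an equivalence class, so the uniqueness you obtain for $\mbf{b}$, $\mbf{c}$, $\mbf{y}$ is uniqueness of the underlying sub-posets (equivalently, of the corresponding elements up to $\sim$), which is the only sense in which the proposition can be read anyway, since commuting letters inside $\mbf{b}$ can always be transposed. Second, the exclusion of a non-maximal occurrence $\ell$ of $v_0$ is cleaner than ``enlarges $|\mbf{b}|$ or leaves a non-commuting letter in $\mbf{b}$'': any truncation ending at $\ell$ must omit the maximal occurrence $\ell^*$ from its prefix, while the maximal truncation ending at $\ell^*$ contains $\ell$, and since $(v_0,v_0)\notin E$ each such $v_0$ contributes to the count, so the non-commutative length achievable at $\ell$ is strictly smaller than at $\ell^*$ and the required equality with $\nci \{\mbf{x}\}^\peq \nci_{v_0}$ fails. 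As for the obstacle you flag: the equality $\nci \mbf{y}(v_0)\nci_{v_0} = \nci \mbf{y}\mbf{c}(v_0)\nci_{v_0}$ already forces every letter of $\mbf{c}$ to commute with $v_0$, and upward-closedness of $\mbf{c}$ in the prefix is automatic from the literal factorization $\mbf{x}=\mbf{y}\mbf{c}(v_0)\mbf{b}$ (a letter of $\mbf{y}$ comparable to a letter of $\mbf{c}$ precedes it in position, hence lies below it in the order), so maximizing $|\mbf{c}|$ over filters consisting of letters commuting with $v_0$---a family closed under union---does yield a unique maximum; your $(c,a,v_0)$ example correctly isolates why commutativity alone is insufficient. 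With those two clarifications written out, the ``routine poset bookkeeping'' you defer does go through.
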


Next, we establish some intermediate results for the kernel $K$.

\begin{lem}\label{basic}

\begin{enumerate}
\item If $x = x_1\cdots x_m \in \bigstar_\Gamma G_v$ is reduced, then \[h_x = h_{x_1\cdots x_m} = \alpha_{x_m^{-1}\cdots x_2^{-1}}(h_{x_1})h_{x_2\cdots x_m}.\]

\item If $x=x_1\cdots x_m, y=y_1\cdots y_n \in \bigstar_\Gamma G_v$ are reduced words such that $x^{-1}y$ is reduced then \[ K(x_1\cdots x_m, y_1\cdots y_n) = K(x_1\cdots x_m, x_1\cdots x_{m-1})K(x_1\cdots x_{m-1}, y_1\cdots y_n).\]
\end{enumerate}

\end{lem}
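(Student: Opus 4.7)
The plan is that both parts of Lemma \ref{basic} are essentially direct consequences of Definition \ref{gppdm}, though part (2) requires carefully untangling how the group action $\alpha$ interacts with the product expression for $h$ on a reduced word.

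For part (1), I will simply read off Definition \ref{gppdm} with $s = x$. By that definition,
\[h_{x_1\cdots x_m} = \alpha_{p_1}^{-1}(h_{x_1})\,\alpha_{p_2}^{-1}(h_{x_2}) \cdots \alpha_{p_{m-1}}^{-1}(h_{x_{m-1}})\, h_{x_m},\]
where $p_j = x_{j+1}\cdots x_m$. The first factor is exactly $\alpha_{x_m^{-1}\cdots x_2^{-1}}(h_{x_1})$ since $p_1^{-1} = x_m^{-1}\cdots x_2^{-1}$ in $\bigstar_\Gamma G_v$. For the remaining product, I recognize that re-indexing $(x_2,\ldots,x_m)$ as a reduced word of length $m-1$ (which is reduced because $x_1\cdots x_m$ is), the corresponding tail groups $p'_{j} = x_{j+2}\cdots x_m = p_{j+1}$ match perfectly, so what remains is precisely $h_{x_2\cdots x_m}$ by a second application of Definition \ref{gppdm}. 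This gives (1).

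For part (2), I compute both sides. On the left I expand $K(x,y) = \alpha_y(h_{x^{-1}y})$ with $x^{-1}y = x_m^{-1}\cdots x_1^{-1}y_1\cdots y_n$. Applying part (1) to this reduced word (using that any truncation of a reduced word is reduced) peels off the leading letter $x_m^{-1} \in G_{v_m}$:
\[h_{x^{-1}y} = \alpha_{y_n^{-1}\cdots y_1^{-1} x_1 \cdots x_{m-1}}(h_{x_m^{-1}})\cdot h_{x_{m-1}^{-1}\cdots x_1^{-1}y_1\cdots y_n}.\]
Applying $\alpha_y$ and using the homomorphism property of the graph product action, the composition $\alpha_y \circ \alpha_{y_n^{-1}\cdots y_1^{-1}x_1\cdots x_{m-1}}$ collapses to $\alpha_{x_1\cdots x_{m-1}}$ since $y \cdot y_n^{-1}\cdots y_1^{-1} = e$. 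On the right-hand side, I compute each factor directly: $K(x_1\cdots x_m, x_1\cdots x_{m-1}) = \alpha_{x_1\cdots x_{m-1}}(h_{x_m^{-1}})$, and $K(x_1\cdots x_{m-1}, y_1\cdots y_n) = \alpha_y(h_{x_{m-1}^{-1}\cdots x_1^{-1}y_1\cdots y_n})$. Multiplying these two factors produces exactly the expression obtained from the left-hand side.

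The only subtle point is that we must know that $h_{x_m^{-1}}$ and the tail factor commute multiplicatively at the correct stage—this is immediate since both values lie in the commutative algebra $\mc{Z}(\mc{A})$. I also need the subword $x_{m-1}^{-1}\cdots x_1^{-1}y_1\cdots y_n$ to be reduced, which follows from the hypothesis that $x^{-1}y$ is reduced together with Proposition \ref{reducedlemma}: dropping the leftmost letter of a reduced word yields a reduced word. No real obstacle arises; both identities are tautological reformulations of Definition \ref{gppdm} once the group-theoretic cancellation $y \cdot y^{-1} = e$ inside the graph product is applied.
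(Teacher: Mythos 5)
Your proof is correct and follows essentially the same route as the paper: part (1) is read off from the telescoping structure of Definition \ref{gppdm} (the paper phrases this as a direct induction, which amounts to the same unrolling), and part (2) peels off the leading letter $x_m^{-1}$ of the reduced word $x^{-1}y$ via part (1) and collapses $\alpha_y\circ\alpha_{y^{-1}x_1\cdots x_{m-1}}$ to $\alpha_{x_1\cdots x_{m-1}}$, exactly as in the paper's computation.
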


\begin{proof}
(1) follows from a direct inductive argument.

To prove (2), we observe that 
\begin{align*}
K(x_1\cdots x_m, y_1\cdots y_m) & = \alpha_{y_1\cdots y_n}(h_{x_m^{-1}\cdots x_1^{-1} y_1\cdots y_n})\\
& = \alpha_{y_1\cdots y_n}(\alpha_{y_n^{-1}\cdots y_1^{-1}x_1\cdots x_{m-1}}(h_{x_m^{-1}})h_{x_{m-1}^{-1}\cdots x_1^{-1}y_1\cdots y_n})\\
& = \alpha_{x_1\cdots x_{m-1}}(h_{x_m^{-1}}) \alpha_{y_1\cdots y_n}(h_{x_{m-1}^{-1}\cdots x_1^{-1}y_1\cdots y_n})\\
& = K(x_1\cdots x_m, x_1\cdots x_{m-1}) K(x_1\cdots x_{m-1},y_1\cdots y_n).\qedhere
\end{align*}
\end{proof}

\begin{prop}
The kernel $K$ is $*$-symmetric.  That is, for every $x, y \in \bigstar_\Gamma G_v,$ \[K(x,y) = K(y,x)^*.\]
\end{prop}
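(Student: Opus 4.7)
The plan is to reduce the $*$-symmetry of $K$ to the identity $\alpha_s(h_s) = h_{s^{-1}}^*$ for every $s \in \bigstar_\Gamma G_v$. This is the graph-product-level analog of the identity $h_{v,a^{-1}}^* = \alpha_{v,a}(h_{v,a})$ which holds for each component $h_v$ by the earlier proposition (since each $h_v$ is assumed positive definite). Crucially, one cannot invoke that proposition directly for $h$ itself, since the positive definiteness of $h$ is precisely what Theorem \ref{posdef} will establish.

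The reduction is purely formal. Since $\alpha_x$ is a $*$-automorphism of $\mc{A}$,
\[
K(y,x)^* = \alpha_x(h_{y^{-1}x})^* = \alpha_x(h_{y^{-1}x}^*) = \alpha_x\bigl(\alpha_{x^{-1}y}(h_{x^{-1}y})\bigr) = \alpha_y(h_{x^{-1}y}) = K(x,y),
\]
where the middle equality is the target identity applied to $s = x^{-1}y$, and the following equality uses $\alpha_x \circ \alpha_{x^{-1}y} = \alpha_y$.

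To establish the target identity, fix a reduced expression $s = s_1 \cdots s_m$ with $s_j \in G_{v_j}$; then $s^{-1} = s_m^{-1}\cdots s_1^{-1}$ is also reduced. Expanding Definition \ref{gppdm} directly, and using that every factor lies in $\mc{Z}(\mc{A})$ (so products commute and adjoints distribute factorwise), both sides rewrite as products indexed by $j = 1, \ldots, m$:
\begin{align*}
\alpha_s(h_s) &= \prod_{j=1}^{m} \alpha_{s_1 \cdots s_j}(h_{v_j, s_j}), \\
h_{s^{-1}}^* &= \prod_{j=1}^{m} \alpha_{s_1 \cdots s_{j-1}}\bigl(h_{v_j, s_j^{-1}}^*\bigr).
\end{align*}
For the first line, with $p_j = s_{j+1}\cdots s_m$ as in the definition, one has $\alpha_s \circ \alpha_{p_j}^{-1} = \alpha_{s_1 \cdots s_j}$. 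For the second line, one writes $s^{-1} = t_1\cdots t_m$ with $t_i = s_{m-i+1}^{-1}$, observes that the tail $r_i = t_{i+1}\cdots t_m = (s_1 \cdots s_{m-i})^{-1}$ gives $\alpha_{r_i}^{-1} = \alpha_{s_1 \cdots s_{m-i}}$, and then reindexes via $j = m - i + 1$. Now applying the component-wise identity $h_{v_j, s_j^{-1}}^* = \alpha_{v_j, s_j}(h_{v_j, s_j})$ inside $\alpha_{s_1\cdots s_{j-1}}$, together with $\alpha_{s_1\cdots s_{j-1}} \circ \alpha_{v_j, s_j} = \alpha_{s_1\cdots s_j}$ (since $\alpha_{v_j, s_j}$ is the restriction of $\alpha_{s_j}$), matches the two displayed products term by term.

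The only real subtlety is the re-indexing of the factors of $h_{s^{-1}}$ just described; after that, the argument is bookkeeping rendered harmless by the centrality of all $h_v$-values. I emphasize that only the positive definiteness of each \emph{component} $h_v$ is used (via the component identity of the earlier proposition), which is consistent with the fact that Theorem \ref{posdef}, which concerns positive definiteness of the graph product $h$ itself, is still to be proved.
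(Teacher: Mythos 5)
Your proof is correct, but it takes a genuinely different route from the paper. The paper proves $*$-symmetry by induction on the length $|x^{-1}y|$: it peels off the leftmost letter $z_1$ of a reduced expression of $x^{-1}y$ using the recursive factorization $h_{z_1\cdots z_m} = \alpha_{(z_2\cdots z_m)^{-1}}(h_{z_1})h_{z_2\cdots z_m}$ (Lemma \ref{basic}(1)), applies the inductive hypothesis to $K(xz_1,y)$, and then unwinds. You instead isolate the clean identity $\alpha_s(h_s) = h_{s^{-1}}^*$ at the level of the graph product and verify it in closed form by fully expanding Definition \ref{gppdm} for $s$ and for the reversed reduced word $s^{-1}$, reindexing, and invoking the component identity $h_{v,a^{-1}}^* = \alpha_{v,a}(h_{v,a})$ together with centrality of the values. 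Both arguments rest on the same two ingredients (the componentwise symmetry of each $h_v$ and the fact that all values lie in $\mc{Z}(\mc{A})$), and your reindexing of the tails $r_i = (s_1\cdots s_{m-i})^{-1}$ is correct; the reversal of a reduced word is again reduced, and well-definedness of $\bigstar_\Gamma h_v$ lets you work with these particular reduced expressions. What your approach buys is a non-inductive, self-contained computation and an explicitly stated intermediate identity (the multiplier symmetry condition for $\bigstar_\Gamma h_v$) that is of independent use; what the paper's induction buys is consistency with the recursive factorization machinery (Lemma \ref{basic}) that it reuses heavily in the later kernel lemmas. You are also right to flag that the positive definiteness of $h$ itself cannot be assumed here, and your argument correctly avoids doing so.
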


\begin{proof}
We proceed by induction on $|x^{-1}y|$.
\begin{itemize}

\item[$\bullet$] $|x^{-1}y| = 0$: trivial.

\item[$\bullet$] $|x^{-1}y| = m$:  Let $x^{-1}y= z_1\cdots z_m$ be an expression of $x^{-1}y$ as a reduced word. Then we have
\begin{align}
K(x,y) &= \alpha_y(h_{x^{-1}y})\notag\\
&=\alpha_y(h_{z_1\cdots z_m})\notag\\
& = \alpha_y(\alpha_{(z_2\cdots z_m)^{-1}}(h_{z_1}) h_{z_2\cdots z_m}) \notag\\
& = \alpha_y(\alpha_{y^{-1}xz_1}(h_{z_1}))\alpha_y(h_{z_1^{-1}x^{-1}y}) \notag\\
&=\alpha_{xz_1}(h_{z_1})K(xz_1,y) \notag\\
&= \alpha_x(h_{z_1^{-1}}^*)K(y,xz_1)^* \label{indhyp}\\
&= \alpha_x(h_{z_1^{-1}}^*)\alpha_{xz_1}(h_{y^{-1}xz_1}^*)\notag\\
& = \alpha_x(h_{z_1^{-1}}^*)\alpha_{xz_1}(h_{z_m^{-1}\cdots z_2^{-1}}^*)\notag \\
& = \alpha_x(h_{z_1^{-1}}^*)\alpha_{xz_1}(\alpha_{z_2\cdots z_{m-1}}(h_{z_m^{-1}}^*) \cdots \alpha_{z_2}(h_{z_3^{-1}}^*) h_{z_2^{-1}}^*)\notag\\
& = \alpha_x(\alpha_{z_1\cdots z_{m-1}}(h_{z_m^{-1}}^*)\cdots \alpha_{z_1}(h_{z_2^{-1}}^*)h_{z_1^{-1}}^*)\notag\\
&= \alpha_x(h_{z_m^{-1}\cdots z_1^{-1}}^*)\notag\\
&= \alpha_x(h_{y^{-1}x}^*)\notag\\
&= K(y,x)^*\notag
\end{align}
where \eqref{indhyp} follows from the inductive hypothesis.\qedhere
\end{itemize}
\end{proof}

We can improve Lemma \ref{basic} as follows (cf. Lemmas 3.9 and 3.10 of \cite{gpucp}).
	
\begin{lem}\label{X1crossterms}
Fix $v_0 \in V$ and let $x \in \bigstar_\Gamma G_v$ be a reduced word.  Suppose $x = ycab$ is in standard form with respect to $v_0$ with $a \in G_{v_0}$.  Then \[K(ycab, z) = K(ycab,yc)K(yc,z)\] whenever $z$ satisfies either of the following conditions.
\begin{enumerate}
\item $\nci \left\{z\right\}^\peq \nci_{v_0} < \nci \left\{x\right\}^\peq \nci_{v_0}$
\item $\nci \left\{z\right\}^\peq \nci_{v_0} = \nci \left\{x\right\}^\peq \nci_{v_0}$ but $\mbf{v}_y \neq \mbf{v}_{y'}$ where $z = y'c'a'b'$ is in standard form with respect to $v_0$.
\end{enumerate}

\end{lem}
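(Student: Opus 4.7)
The plan is to reinterpret the target equation as a multiplicativity statement for $h = \bigstar_\Gamma h_v$ and to reduce the whole lemma to a single combinatorial word claim. Iterating Lemma~\ref{basic}(1) gives the identity $h_{w_1 w_2} = \alpha_{w_2^{-1}}(h_{w_1})\, h_{w_2}$ whenever the concatenation of two reduced expressions $w_1$ and $w_2$ is itself a reduced word. Applying this with $w_1 := b^{-1}a^{-1}$, which is reduced because $ycab$ is reduced and the leftmost letter of $\mbf{b}$ has vertex distinct from $v_0$, and with $w_2$ a reduced expression for the group element $(yc)^{-1}z$, then applying $\alpha_z$ to both sides and using $z \cdot w_2^{-1} = yc$, one obtains precisely $K(ycab,z) = K(ycab,yc)\, K(yc,z)$. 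The whole content of the lemma thus becomes the combinatorial claim that, under (1) or (2), the concatenation $b^{-1}a^{-1}\cdot w_2$ is itself reduced in $\bigstar_\Gamma G_v$; equivalently, in any reduction of $x^{-1}z = b^{-1}a^{-1}c^{-1}y^{-1}z$ no letter of $ab$ participates in a cancellation.

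To see that $a^{-1}$ cannot cancel, suppose instead that in some reduction it is brought adjacent to a $v_0$-letter $a^*\in z$. Every letter initially between $a^{-1}$ and $a^*$ whose vertex lies outside $E(v_0)$ must then be cancelled before the pair can meet, since such a letter cannot be commuted past either of the vertex-$v_0$ letters $a^{-1}$ and $a^*$. By standard form every letter $\eta_j$ of $\mbf{y}$ has vertex $w_j \notin E(v_0)$; a quick case-check shows that its only admissible cancellation partner is a same-vertex letter $\eta_j^*\in z$ (the letters of $\mbf{c}$ lie inside $E(v_0)$; $a$ has the wrong vertex; and any same-vertex letter of $\mbf{b}$ is blocked from $\eta_j^{-1}$ by $a^{-1}$, whose vertex $v_0$ does not commute with $w_j$). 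The $|\mbf{y}|$ resulting letters $\eta_j^*$ now stand as vertices outside $E(v_0)$ preceding $a^*$ in $z$, yielding $\nci \{z\}^{\peq}\nci_{v_0} \geq |\mbf{y}| = \nci \{x\}^{\peq}\nci_{v_0}$. This contradicts (1) outright. Under (2) equality forces $\mbf{v}_{y'}$ to consist of exactly the vertices $w_j$ with the same commutation pattern as $\mbf{v}_y$, and the uniqueness part of Proposition~\ref{reducedlemma}(2) then identifies $\mbf{v}_{y'}$ with $\mbf{v}_y$, again contradicting the hypothesis.

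For a letter $\beta\in\mbf{b}$ of vertex $u$: if $\beta^{-1}$ participated in a cancellation in $x^{-1}z$, then commuting it to reach its partner would in particular require $(u,v_0)\in E$ (to cross $a^{-1}$) and would require $u$ to commute with every letter of $\mbf{b}$ lying between $\beta$ and $a$ in the standard-form expression. But then $\beta$ itself could be commuted past those letters and past $a$ into $\mbf{c}$, yielding a configuration with the same non-commutative length and strictly shorter trailing segment, violating the minimality of $\mbf{b}$ built into Definition~\ref{stdform}. Any would-be cancellation of $\beta^{-1}$ with a letter of $y^{-1}$, $c^{-1}$, or another position of $b^{-1}$ is separately precluded by the reducedness of $x$ itself: the non-commuting blocker guaranteed by Definition~\ref{redv} between any same-vertex pair inside $x$ persists inside the word $b^{-1}a^{-1}c^{-1}y^{-1}z$. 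Combined with the preceding paragraph, no letter of $ab$ can cancel, so $b^{-1}a^{-1}w_2$ is reduced and the multiplicativity identity of the first paragraph delivers the lemma. The main obstacle I expect is the $a^{-1}$-argument: translating a hypothetical cancellation into the $\nci$-inequality, and under (2) into the matching of $\mbf{v}_{y'}$ with $\mbf{v}_y$ via Proposition~\ref{reducedlemma}(2), is the precise place where the hypotheses (1) and (2) enter the proof.
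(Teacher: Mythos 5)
Your opening reduction is correct and is a genuinely different organization from the paper's: you convert the lemma into the single combinatorial claim that $x^{-1}z$ reduces to $b^{-1}a^{-1}\cdot w_2$ with this concatenation reduced, whereas the paper never isolates such a claim and instead proves the kernel identity directly by a double induction (on $\nci \left\{x\right\}^\peq \nci_{v_0}$ and then on $|z|$, resp.\ on $m+m'$), peeling one letter off $z$ at a time and invoking Lemma \ref{basic}(2) only in the genuinely reduced base case. The problem is that your proof of the combinatorial claim has concrete gaps. First, it is not true that standard form forces every letter of $\mbf{y}$ to lie outside the link of $v_0$, nor that $|\mbf{y}| = \nci \left\{x\right\}^\peq \nci_{v_0}$: take $V = \{v_0,p,q\}$ with $(v_0,p)$ the only edge and $\mbf{x} = (q,p,q,v_0)$; then $\mbf{x}$ admits no nontrivial rearrangement, the only truncation ending in $v_0$ with non-commutative length $2$ is $\mbf{x}$ itself, and $\mbf{y} = (q,p,q)$ contains the commuting letter $p$. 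Your count of ``$|\mbf{y}|$ resulting letters'' is therefore derived from a false premise; it can be repaired by counting only the $\nci \left\{x\right\}^\peq \nci_{v_0}$ letters of $\mbf{y}$ outside the link of $v_0$, but the repair has to be made.

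Second, and more seriously, the informal rewriting argument is never made rigorous, and this matters in a graph product: same-vertex letters merge rather than cancel, a merged letter still blocks unless it is exactly the identity, blockers can themselves be eliminated in cascades, and two letters of $x^{-1}$ separated by a blocker inside the reduced word $x^{-1}$ can in principle meet inside $x^{-1}z$ once that blocker is cancelled against $z$ --- so ``the blocker guaranteed by Definition \ref{redv} persists'' is an assertion, not an observation, as is the claim that distinct letters of $y^{-1}$ acquire distinct partners in $z$. Pinning any of this down requires a well-founded induction on the reduction process (or on $|z|$), which is exactly the induction the paper performs and which you have omitted. Finally, your treatment of hypothesis (2) is a single sentence: showing that equality of non-commutative lengths plus a meeting of $a^{-1}$ with a $v_0$-letter of $z$ forces $\mbf{v}_{y} = \mbf{v}_{y'}$ requires matching not only the non-commuting letters of $\mbf{y}$ and $\mbf{y}'$ in the correct order but also the commuting letters trapped between them, and Proposition \ref{reducedlemma}(2) cannot do this for you --- it compares two reduced expressions of the same group element, not standard-form prefixes of two different elements. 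That case is the entire content of hypothesis (2) and is currently unproven.
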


\begin{proof}
Proof of (1): We proceed by induction on $\nci \left\{x\right\}^\peq \nci_{v_0}$. 

\begin{itemize}
	\item $\nci \left\{x\right\}^\peq \nci_{v_0} = 0$: We proceed by further induction on $|z|$.  
		\begin{itemize}
			\item[\bb] $|z| = 0$:  $z= 1$, and the statement is true due to (2) of Lemma \ref{basic}.  
			
			\item [\bb] $|z| = k >0$: if $b^{-1}a^{-1}c^{-1}y{-1}z$ is reduced then the equality holds thanks to (2) of Lemma \ref{basic}.  Suppose $b^{-1}a^{-1}c^{-1}y^{-1}x'$ is not reduced.  In this case $y =e$. Let $c = c_1\cdots c_m$ and $z = z_1\cdots z_k$.  By the definition of standard form, we have that we can rearrange the $c_i$'s and $z_i$'s so that $\mbf{v}_{c_1} = \mbf{v}_{z_1}$.  That is, none of the $b$ terms can cross past $a$; otherwise the minimality of $|b|$ would be contradicted.  If $c_1 \neq z_1$, we have
\begin{align}
&K(ycab,z) \notag\\
&= \alpha_{z_1\cdots z_k}(h_{b^{-1}a^{-1}c_m^{-1}\cdots c_2^{-1}(c_1^{-1}z_1)z_2\cdots z_k})\notag\\
 & = \alpha_{z_1}(\alpha_{z_2\cdots z_k}(h_{b^{-1}a^{-1}c_m^{-1}\cdots c_2^{-1}(c_1^{-1}z_1)z_2\cdots z_k}))\notag\\
 &=  \alpha_{z_1}(K((z_1^{-1}c_1)c_2\cdots c_mab, z_2\cdots z_k))\notag\\
 &=\alpha_{z_1}(K((z_1^{-1}c_1)c_2\cdots c_mab, (z_1^{-1}c_1)c_2\cdots c_m)K((z_1^{-1}c_1)c_2\cdots c_m, z_2\cdots z_k))  \label{1}\\
 &= K(c_1c_2\cdots c_mab, c_1c_2\cdots c_m)K(c_1c_2\cdots c_m, z_1\cdots z_k)\notag 
\end{align}
where \eqref{1} follows from both the fact that \[\nci \left\{z_2 \cdots z_k\right\}^\peq\nci_{v_0} < \nci \left\{(z_1^{-1}c_1) c_2 \cdots c_m a b \right\}^\peq\nci_{v_0}\] and the inductive hypothesis.  The case where $c_1 = z_1$ follows from applying the inductive hypothesis to the fact that  $\nci \left\{z_2 \cdots z_k\right\}^\peq\nci_{v_0} < \nci \left\{c_2 \cdots c_m a b \right\}^\peq\nci_{v_0}$.
		\end{itemize}
		
	\item $\nci \left\{x\right\}^\peq \nci_{v_0} >0$:  Again we induct further on $|z|$.  
		\begin{itemize}
			\item[\bb] $|z| =0$: Again, this follows from (2) of Lemma \ref{basic}.  
			
			\item[\bb] $|\mbf{x}'| = k>0$:  If $b^{-1}a^{-1}c^{-1}y^{-1}z$ is reduced then the equality holds thanks to Lemma \ref{basic}.  Suppose $b^{-1}a^{-1}c^{-1}y^{-1}$ is not reduced, and let $cy = w_1 \cdots w_m$ and $z = z_1 \cdots z_k$.  As before, we can rearrange the $w_i$'s and $z_i$'s so that $\mbf{v}_{w_1} = \mbf{v}_{z_1}$.  If $(\mbf{v}_{w_1}, v_0) \in E$  then the argument in the $\nci \left\{x\right\}^\peq \nci_{v_0} = 0$ case holds.  Assume that $(\mbf{v}_{w_1}, v_0) \notin E$.  Then $\nci w_2 \cdots w_m a\nci_{v_0} = \nci yca\nci_{v_0} - 1 \geq 0$.  It is a quick check to see that if $\nci \left\{z\right\}^\peq\nci_{v_0} \neq -1$ then deleting $z_1$ from the left decreases the non-commutative length by one, and if $\nci \left\{z\right\}^\peq\nci_{v_0} = -1$, then deleting $z_1$ leaves the non-commutative length alone.  In either case, the inductive hypothesis applies, yielding the equality as illustrated above.
		\end{itemize}
	\end{itemize}
	
	Proof of (2): Let $yc = w_1\cdots w_m$ and $y'c' = w'_1\cdots w'_{m'}$.  We proceed by induction on $\nci \left\{x\right\}^\peq\nci_{v_0}$.
\begin{itemize}
	\item $\nci \left\{x\right\}^\peq \nci_{v_0} = 1$: We induct further on $m + m'$.
	
	\begin{itemize}
		\item[$\bullet \bullet$] $m+m' = 2$: Since $\mbf{v}_y \neq \mbf{v}_{y'}$, we immediately get that $b^{-1}a^{-1}w_1^{-1}w_1'a'b'$ is reduced.  So the equality follows from Lemma \ref{basic}.
		
		\item[$\bullet \bullet$] $m + m' > 2$: If $b^{-1}a^{-1}c^{-1}y^{-1}y'c'a'b'$ is reduced then we are done.  Suppose $b^{-1}a^{-1}c^{-1}y^{-1}y'c'a'b'$ is not reduced.  Then we can rearrange the $w$ and $w'$ terms so that $\mbf{v}_{w_1} = \mbf{v}_{w'_1}$.  
		Then we have
		\begin{align}
		&K(w_1\cdots w_m a b, w'_1 \cdots w'_{m'} a' b') \notag\\
		& = \alpha_{w'_1\cdots w'_{m'}ab}(h_{b^{-1}a^{-1}w_m^{-1}\cdots w_2^{-1}(w_1^{-1}w'_1)w'_2\cdots w'_{m'}ab}) \notag\\
		& = \alpha_{w'_1}(\alpha_{w'_2\cdots w'_{m'}ab}(h_{b^{-1}a^{-1}w_m^{-1}\cdots w_2^{-1}(w_1^{-1}w'_1)w'_2\cdots w'_{m'}ab}))\notag\\
		& = \alpha_{w'_1}(K(({w'_1}^{-1}w_1)w_2\cdots w_m ab, w'_2\cdots w'_{m'} ab)	\label{2}
		\end{align}
		Since $\mbf{y} \neq \mbf{y}'$ we have that $(\mbf{v}_{w_1}, v_0) \in E$.   The inductive hypothesis on $m + m'$ applies, yielding the desired equality.
	\end{itemize}
	
	\item $\nci \left\{x\right\}^\peq \nci_{v_0} >1$: Again, induct further on $m + m'$.
	
		\begin{itemize}
		
			\item[$\bullet \bullet$] $m + m' = 2\nci \left\{x\right\}^\peq \nci_{v_0}$: Suppose $b^{-1}a^{-1}c^{-1}y^{-1}y'c'a'b'$ is not reduced and that $\mbf{v}_{w_1} = \mbf{v}_{w'_1}$.  Then we obtain the same decomposition as in \eqref{2}.  Then by applying part (1) to the case where $w_1 \neq w_1'$ and the inductive hypothesis to the $w_1 = w_1'$ case, we obtain the desired equality.

			\item[$\bullet \bullet$] $m + m' > 2\nci\left\{x\right\}^\peq\nci_{v_0}$: Suppose $b^{-1}a^{-1}c^{-1}y^{-1}y'c'a'b'$ is not reduced and that $\mbf{v}_{w_1} = \mbf{v}_{w'_1}$; consider the decomposition from \eqref{2}.  If $(\mbf{v}_{w_1}, v_0) \notin E$, then as in the $m + m' = 2\nci \left\{x\right\}^\peq \nci_{v_0}$ case, apply part (1) to the $w_1 \neq w_1'$ case and the inductive hypothesis to the $w_1 = w_1'$ case.  If $(\mbf{v}_{w_1}, v_0) \in E$, apply the inductive hypothesis on $m + m'$.\qedhere		
		\end{itemize}	
\end{itemize}

\end{proof}

As in \cite{boca,gpucp}, we consider $\mc{A}$ to be a unital $C^*$-subalgebra of $B(\mc{H})$ for a Hilbert space $\mc{H}$. It will suffice to show that for any finite subset $X \subset \bigstar_\Gamma G_v$ and any function $\xi: X \rightarrow \mc{H}$, we have 
\begin{align}
\sum_{x,y \in X} \langle K(x,y)\xi(y) | \xi(x)\rangle& \geq 0. \label{goal}
\end{align}
Since every finite set is contained in a finite complete set, it suffices to show \eqref{goal} for every complete set $X$ and function $\xi: X\rightarrow \mc{H}$.

We will make use of a Stinespring construction for the present context. Let $X\subset \bigstar_\Gamma G_v$ be a complete set and consider $\mb{C}^{|X|}$ with standard basis $\left\{e_x\right\}_{x\in X}$.  The inequality \eqref{goal} implies that we can define a positive semi-definite sesquilinear form on $\mc{H} \otimes \mb{C}^{|X|}$ given by \[ \langle \xi \otimes e_y | \eta \otimes e_x\rangle = \langle K(x,y) \xi | \eta\rangle.\]  By standard arguments this yields a Hilbert space  that we will denote by $\mc{H} \otimes_K \mb{C}^{|X|}$.  For each $x \in X$ let $V_x: \mc{H} \rightarrow \mc{H}\otimes_K \mb{C}^{|X|}$ be given by $V_x(\xi) = \xi \otimes_\Theta e_x$.  Because $h$ is unital, one sees immediately that $V_x$ is an isometry for any $x \in X$.

Given $x \in X$ with $|x| = 1$, we define the \emph{left-concatenation operator} $L_x: \mc{H}\otimes_\Theta \mb{C}^{|X|} \rightarrow \mc{H} \otimes_\Theta \mb{C}^{|X|}$ as follows. \[L_x(\xi \otimes_\Theta e_y) = \left\{\begin{array}{lcr}
0 & \text{if} & xy \notin X \\
&&\\
\xi \otimes_\Theta e_{xy} & \text{if} & xy \in X
\end{array}\right.\]  Observe that $L_x$ is bounded:
\begin{align*}
||L_x (\xi\otimes e_y)||^2_{\mc{H}\otimes_K \mb{C}^{|X|}} &\leq \langle \xi \otimes_K e_{xy} | \xi \otimes_K e_{xy}\rangle \\
& = \langle K(xy,xy) \xi | \xi\rangle\\
&= ||\xi||^2_\mc{H}.
\end{align*}

As in \cite{gpucp}, we have a version of the Schwarz inequality for our situation.

\begin{prop}\label{schwarz}
Let $X \subset \mc{W}_\text{red}$ be a complete set, and assume that for every function $\xi: X \rightarrow \mc{H}$, \eqref{goal} holds. For $1 \leq i \leq N$, let $c_i, b_i, c_ib_i \in X$. If additionally we have $K(c_ib_i,c_j) = K(c_ib_i,c_i)K(c_i,c_j)$ for every $1 \leq i, j \leq N$, then we have the following matrix inequality. \[ \left[ K(c_ib_i,c_jb_j)\right]_{ij} \geq \left[K(c_ib_i,c_i)K(c_i,c_j)K(c_j,c_jb_j)\right]_{ij}\]
\end{prop}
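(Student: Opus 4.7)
The plan is to leverage the Stinespring-type construction already in place. Under the hypothesis that the positivity inequality \eqref{goal} holds for the complete set $X$, we have the Hilbert space $\mc{H}\otimes_K \mb{C}^{|X|}$ and isometries $V_x$ satisfying $V_x^*V_y = K(x,y)$ in $B(\mc{H})$ (this follows directly from the definition of the inner product on $\mc{H}\otimes_K\mb{C}^{|X|}$). Let $P_i := V_{c_i}V_{c_i}^*$ denote the orthogonal projection onto the range of $V_{c_i}$.

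Given arbitrary $\xi_1,\dots,\xi_N \in \mc{H}$, set
\[
A := \sum_j V_{c_jb_j}\xi_j \qquad \text{and} \qquad B := \sum_j P_j V_{c_jb_j}\xi_j.
\]
A direct expansion using $V_x^*V_y=K(x,y)$ shows that
\[
\sum_{i,j}\langle K(c_ib_i,c_jb_j)\xi_j,\xi_i\rangle = \|A\|^2
\]
and
\[
\sum_{i,j}\langle K(c_ib_i,c_i)K(c_i,c_j)K(c_j,c_jb_j)\xi_j,\xi_i\rangle = \|B\|^2.
\]
Thus the desired matrix inequality is equivalent to the scalar inequality $\|A\|^2 \geq \|B\|^2$ for every choice of the $\xi_i$.

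To establish this, the key step will be verifying the identity $\langle A,B\rangle = \|B\|^2$. Once this is known, the Pythagorean-style expansion $\|A-B\|^2 = \|A\|^2 - 2\operatorname{Re}\langle A,B\rangle + \|B\|^2$ forces $\|A\|^2-\|B\|^2=\|A-B\|^2\geq 0$. The identity itself reduces to checking that, for each $i,j$,
\[
V_{c_ib_i}^* P_i V_{c_jb_j} = K(c_ib_i,c_i)K(c_i,c_j)K(c_j,c_jb_j).
\]
The left-hand side factors as $K(c_ib_i,c_i)\cdot K(c_i,c_jb_j)$, so the whole matter boils down to establishing the right-factoring identity $K(c_i,c_jb_j) = K(c_i,c_j)K(c_j,c_jb_j)$. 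This will follow from the left-factoring hypothesis with $i$ and $j$ swapped, namely $K(c_jb_j,c_i) = K(c_jb_j,c_j)K(c_j,c_i)$, by taking adjoints and invoking the $*$-symmetry of $K$ together with the commutativity of $\mc{Z}(\mc{A})$, in which all kernel values reside.

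The main conceptual point (and the only step requiring any care) is the two-sided nature of the hypothesis: although it is stated as a left-factoring identity associated with the rows $c_ib_i$, the argument needs the analogous right-factoring identity along the columns $c_jb_j$, and the passage between them relies on both the $*$-symmetry of $K$ established earlier and the commutativity of $\mc{Z}(\mc{A})$. After this observation, everything reduces to a clean Pythagorean computation in $\mc{H}\otimes_K\mb{C}^{|X|}$.
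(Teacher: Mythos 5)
Your proposal is correct and is essentially the paper's own argument in lighter notation: writing $A_j = V_{c_jb_j}\xi_j$ and $B_j = P_jV_{c_jb_j}\xi_j$, your identity $\langle A,B\rangle = \|B\|^2$ is exactly the paper's vanishing cross-term computation (the paper uses the hypothesis directly where you use its adjoint via the $*$-symmetry of $K$, a cosmetic difference), and your Pythagorean step $\|A\|^2-\|B\|^2=\|A-B\|^2\geq 0$ matches the paper's final display with $A_j-B_j = L_{c_j}(I-V_eV_e^*L_{c_j}^*L_{c_j})(\xi_j\otimes_K e_{b_j})$.
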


\begin{proof}
Given $x,y \in X,$ \[K(x,y) = V_e^*L_x^*L_yV_e.\]  


Our goal is to show \[\left[V_e^*L_{b_i}^*L_{c_i}^*L_{c_j}L_{b_j}V_e\right]_{ij}  \geq \left[V_e^*L_{b_i}^*L_{c_i}^*L_{c_i}V_eV_e^*L_{c_i}^*L_{c_j}V_eV_e^*L_{c_j}^*L_{c_j}L_{b_j}V_e\right]_{ij},\] or equivalently \[\sum_{i,j=1}^N \langle (V_e^*L_{b_i}^*L_{c_i}^*L_{c_j}L_{b_j}V_e - V_e^*L_{b_i}^*L_{c_i}^*L_{c_i}V_eV_e^*L_{c_i}^*L_{c_j}V_eV_e^*L_{c_j}^*L_{c_j}L_{b_j}V_e) \xi_j | \xi_i \rangle \geq 0\] for any $\xi_1,\dots,\xi_N \in \mc{H}$.  First, for any $1\leq i,j\leq N$, consider the following equality.

\begin{align*}
&\langle L_{c_i}^*L_{c_j} V_eV_e^*L_{c_j}^*L_{c_j} (\xi_j \otimes_K e_{b_j}) | (I - V_eV_e^*L_{c_i}^*L_{c_i})(\xi_i \otimes_K e_{b_i})\rangle\\
 & = \langle (K(c_j, c_jb_j)\xi_j) \otimes_K e_{c_j}|\xi_i \otimes_K e_{c_ib_i}\rangle \\
 &- \langle (K(c_j,c_jb_j) \xi_j) \otimes_K e_{c_j} | (K(c_i,c_ib_i) \xi_i) \otimes_K e_{c_i}\rangle\\
& = \langle K(c_ib_i,c_j)K(c_j,c_jb_j)\xi_j|\xi_i\rangle - \langle K(c_i,c_j)K(c_j, c_jb_j)\xi_j|K(c_i, c_ib_i)\xi_i\rangle\\
& = \langle K(c_ib_i,c_j)K(c_j,c_jb_j)\xi_j|\xi_i\rangle - \langle K(c_ib_i, c_i)K(c_i,c_j)K(c_j, c_jb_j)\xi_j|\xi_i\rangle\\
&= 0
\end{align*}
\noin Thus we have 

\begin{align*}
&\sum_{i,j=1}^N \langle (V_e^*L_{b_i}^*L_{c_i}^*L_{c_j}L_{b_j}V_e - V_e^*L_{b_i}^*L_{c_i}^*L_{c_i}V_eV_e^*L_{c_i}^*L_{c_j}V_eV_e^*L_{c_j}^*L_{c_j}L_{b_j}V_e) \xi_j | \xi_i\rangle\\
 & = \sum_{i,j=1}^N \Big(\langle L_{c_i}^*L_{c_j} (\xi_j \otimes_K e_{b_j}) | \xi_i \otimes_K e_{b_i}\rangle \\
 &- \langle L_{c_i}^*L_{c_j} V_eV_e^*L_{c_j}^*L_{c_j} (\xi_j \otimes_K e_{b_j}) | V_eV_e^*L_{c_i}^*L_{c_i}(\xi_i\otimes_K e_{b_i})\rangle\Big)\\
& = \sum_{i,j =1}^N\langle L_{c_i}^*L_{c_j} (I-V_eV_e^*L_{c_j}^*L_{c_j}) (\xi_j \otimes_K e_{b_j}) | (I-V_eV_e^*L_{c_i}^*L_{c_i})(\xi_i \otimes_K e_{b_i})\rangle\\
& + 2\frak{Re}\langle L_{c_i}^*L_{c_j} V_eV_e^*L_{c_j}^*L_{c_j} (\xi_j \otimes_K e_{b_j}) |(I- V_eV_e^*L_{c_i}^*L_{c_i})(\xi_i\otimes_K e_{b_i})\rangle\\
&= \sum_{i,j =1}^N\langle L_{c_j} (I-V_eV_e^*L_{c_j}^*L_{c_j}) (\xi_j \otimes_K e_{b_j}) | L_{c_i}(I-V_eV_e^*L_{c_i}^*L_{c_i})(\xi_i \otimes_K e_{b_i})\rangle\\
& \geq 0.\qedhere
\end{align*}
\end{proof}

\noin This implies the positive definite multiplier analog of Schwarz's inequality: for any sequence $a_1,\dots a_n \in G_{v_0},$ \[\left[\alpha_{a_j}(h_{a_i^{-1}a_j})\right]_{ij} = \left[K(a_i,a_j)\right]_{ij} \geq \left[K(a_i,e)K(e,a_j)\right]_{ij} = \left[h_{a_i^{-1}}\alpha_{a_j}(h_{a_j})\right]_{ij}.\]

\begin{lem}\label{Y1square}
Let $\left\{ x_i\right\}_{i=1}^N \in (\mc{W}_\text{red})^N$ be a finite sequence such that for every $1 \leq i \leq N,$ we have  $v_0 \in \mbf{v}_{x_i}$.  For each $1 \leq i \leq N$, let $x_i = y_ic_ia_ib_i$ be in standard form with respect to $v_0$ ($a_i \in G_{v_0}$).  Assume the following.
\begin{enumerate}
	\item For every $1 \leq i,j \leq N, \mbf{v}_{y_i} = \mbf{v}_{y_j}$;
	\item\label{sc} For every complete set $X \subsetneq (\left\{ x_i\right\}_{i=1}^N)^\peq$ and any function $\xi: X \rightarrow \mc{H}$, \eqref{goal} holds.
\end{enumerate}
Then \[\left[K(x_i,x_j)\right]_{ij} \geq \left[K(y_ic_ia_ib_i,y_ic_i)K(y_ic_i,y_jc_j)K(y_jc_j,y_jc_ja_jb_j)\right]_{ij}.\]
\end{lem}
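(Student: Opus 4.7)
The plan is to apply the Schwarz-type inequality of Proposition \ref{schwarz} with the specialization $(c_i, b_i) = (y_ic_i, a_ib_i)$ in that proposition's notation, so that $c_ib_i = x_i$. This requires the factorization
\[K(x_i, y_jc_j) = K(x_i, y_ic_i)\,K(y_ic_i, y_jc_j) \qquad (1 \le i, j \le N)\]
together with a Stinespring-like Hilbert space in which the relevant vectors can be constructed. Hypothesis (2) of the lemma—\eqref{goal} on every proper complete subset of $(\{x_i\}_{i=1}^N)^\peq$—will supply the positivity needed for the latter.

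For the factorization, the case $i = j$ is immediate since $K(w,w) = 1_\mc{A}$. For $i \neq j$, I would apply Lemma \ref{X1crossterms}(1), whose hypothesis is $\nci\{y_jc_j\}^\peq\nci_{v_0} < \nci\{x_i\}^\peq\nci_{v_0}$. By the definition of standard form every letter of $c_j$ commutes with $v_0$, and $\nci\{x_j\}^\peq\nci_{v_0}$ is realized by the truncation $y_jc_ja_j$, whose nci equals the number of non-$v_0$-commuting letters in $y_j$. Any truncation of $y_jc_j$ ending in $v_0$ must terminate at some internal $v_0$-letter of $y_j$, and its nci is strictly smaller than the above—at minimum, the trailing $v_0$-letter of such a truncation fails to be counted in its nci while it is counted in that of $y_jc_ja_j$. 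Since $\mbf{v}_{y_i} = \mbf{v}_{y_j}$ forces $\nci\{x_i\}^\peq\nci_{v_0} = \nci\{x_j\}^\peq\nci_{v_0}$, the required strict inequality holds.

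With the factorization secured, I would form the Stinespring-like Hilbert space $\mc{H}\otimes_K\mb{C}^{|X|}$ over $X := (\{x_i\}_{i=1}^N)^\peq \setminus \{x_i\}_{i=1}^N$—a proper complete subset on which \eqref{goal} holds by hypothesis (2)—together with its isometries $V_w$ and left-concatenation operators $L_g$, and attempt to construct vectors $v_i$ analogous to those in the proof of Proposition \ref{schwarz} so that the identity
\[
\sum_{i,j}\bigl\langle \bigl(K(x_i, x_j) - K(x_i, y_ic_i)K(y_ic_i, y_jc_j)K(y_jc_j, x_j)\bigr)\xi_j \bigm| \xi_i\bigr\rangle = \Bigl\|\sum_i v_i\Bigr\|^2 \ge 0
\]
emerges. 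The main obstacle is that the $x_i$'s are absent from $X$, so the naive Stinespring vector $L_{y_ic_i}(\xi_i \otimes_K e_{a_ib_i})$ vanishes in $\mc{H}\otimes_K\mb{C}^{|X|}$ and the Schwarz identity collapses to $0 = 0$. The remedy—following the pattern of \cite{gpucp}—is to formally enlarge the pre-Hilbert space to accommodate the $x_i$-directions, using the factorization from the previous paragraph as the essential compatibility glue; verifying that this enlargement yields a positive semi-definite form, rather than just a sesquilinear one, is the technical heart of the proof.
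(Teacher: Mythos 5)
Your setup is partially on target: the factorization $K(x_i, y_jc_j) = K(x_i, y_ic_i)K(y_ic_i, y_jc_j)$ does hold, and your argument for it via Lemma \ref{X1crossterms}(1) --- any truncation of $y_jc_j$ ending in $v_0$ must terminate at a $v_0$-letter of $y_j$, which is counted in $\nci \mbf{v}_{y_jc_ja_j}\nci_{v_0}$ but excluded from the truncation's own non-commutative length --- is essentially correct. The genuine gap is exactly the point you defer as ``the technical heart'': Proposition \ref{schwarz} with the substitution $(c_i,b_i)\mapsto(y_ic_i,a_ib_i)$ requires a complete set $X$ containing every $x_i$ on which \eqref{goal} already holds, and any such $X$ contains $(\left\{x_i\right\}_{i=1}^N)^\peq$, which is precisely the set excluded by hypothesis (2). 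Your proposed remedy --- formally enlarging the pre-Hilbert space to accommodate the $x_i$-directions and then ``verifying that this enlargement yields a positive semi-definite form'' --- is not an argument but a restatement of the problem: positive semi-definiteness of the enlarged form on the $x_i$-directions is, up to the cross-term identities you already have, the content of \eqref{goal} on $(\left\{x_i\right\}_{i=1}^N)^\peq$, which is what the lemma exists to feed into the induction of Theorem \ref{posdef}. As written, the proposal is circular, or at best leaves the entire difficulty unresolved.

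The paper closes this gap by a further induction on $\nci (\left\{x_i\right\}_{i=1}^N)^\peq\nci_{v_0}$ that never invokes \eqref{goal} on the full set. In the base case $y_i = e$, repeated use of Lemma \ref{X1crossterms} factors $K(c_ia_ib_i,c_ja_jb_j)$ into four kernels and reduces the claim to $\left[K(c_ia_i,c_ia_j)K(c_ia_j,c_ja_j)\right]_{ij} \geq \left[K(c_ia_i,c_i)K(c_i,c_j)K(c_j,c_ja_j)\right]_{ij}$, which follows from the single-vertex Schwarz inequality on $G_{v_0}$ (where \eqref{goal} is available directly from the positive-definiteness of $h_{v_0}$, so Proposition \ref{schwarz} applies legitimately) combined with the commutation relation $\alpha_a(h_b)=h_b$ and Lemma IV.4.24 of \cite{takesaki}. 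In the inductive step the first letter $y_1(i)$ of $y_i$ is stripped off: when $y_1(i)\neq y_1(j)$ the two sides are shown to be \emph{equal} by a direct computation with Lemma \ref{X1crossterms}, and when $y_1(i)=y_1(j)$ Proposition \ref{schwarz} is applied to the truncated words $y_2(i)\cdots y_m(i)c_ia_ib_i$, whose downward closure is a \emph{proper} complete subset of $(\left\{x_i\right\}_{i=1}^N)^\peq$, so that hypothesis (2) genuinely applies. You would need to supply an argument of this kind, or another demonstrably non-circular one, to complete the proof.
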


\begin{proof}\hspace*{\fill}

%
	\begin{itemize}
		\item First suppose $\nci (\left\{x_i\right\}_{i=1}^N)^\peq\nci_{v_0} = 0$. Then for every $1 \leq i \leq N, y_i = e$.  So $x_i = c_ia_ib_i$.  Standard form implies that for each $1 \leq i \leq N, c_i$ commutes with $a_i$.  Note that 
		\begin{align}
		K(c_ia_ib_i,c_ja_jb_j) & = \alpha_{c_ja_jb_j}(h_{b_i^{-1}a_i^{-1}c_i^{-1}c_ja_jb_j})\notag\\
		& = \alpha_{c_ja_jb_j}(h_{b_i^{-1}(a_i^{-1}a_j)c_i^{-1}c_jb_j})\notag\\
		& = \alpha_{c_ja_jb_j}(\alpha_{b_j^{-1}c_j^{-1}c_i}(h_{b_i^{-1}(a_i^{-1}a_j)})h_{c_i^{-1}c_jb_j})\label{4.1}\\
		& = \alpha_{c_ja_jb_j}(\alpha_{b_j^{-1}c_j^{-1}c_i}(\alpha_{a_j^{-1}a_i}(h_{b_i^{-1}})h_{a_i^{-1}a_j})\alpha_{b_j^{-1}}(h_{c_i^{-1}c_j})h_{b_j})\label{4.2}\\
		& = \alpha_{c_ia_i}(h_{b_i^{-1}})\alpha_{c_ia_j}(h_{a_i^{-1}a_j})\alpha_{c_ja_j}(h_{c_i^{-1}c_j})\alpha_{c_ja_jb_j}(h_{b_j})\notag\\
		& = K(c_ia_ib_i,c_ia_i)K(c_ia_i,c_ia_j)K(c_ia_j,c_ja_j)K(c_ja_j,c_ja_jb_j)\notag
		\end{align}
		where \eqref{4.1} and \eqref{4.2} follow from Lemma \ref{X1crossterms}.  Also by Lemma \ref{X1crossterms}, we have
		\begin{align}
		&K(c_ia_ib_i,c_i)K(c_i,c_j)K(c_j,c_ja_jb_j) \notag\\
		&= K(c_ia_ib_i,c_ia_i)K(c_ia_i, c_i)K(c_i,c_j)K(c_j,c_ja_j)K(c_ja_j,c_ja_jb_j).\notag
		\end{align}  Thus it suffices to show
		\[\left[K(c_ia_i,c_ia_j)K(c_ia_j,c_ja_j)\right]_{ij} \geq \left[K(c_ia_i, c_i)K(c_i,c_j)K(c_j,c_ja_j)\right]_{ij}.\]
		
		If we take $h_{v,s} \in \mc{Z}(\mc{A})^+$ for $v \in V$ and $s \in G_v$, then the first condition for commuting multipliers implies that $\alpha_{a}(h_b) = h_b$ for any $a \in G_v$ and $ b \in G_w$ with $(v,w) \in E$.  Then we have
		
		\begin{align}
		\left[K(c_ia_i,c_ia_j)K(c_ia_j,c_ja_j)\right]_{ij} & = \left[\alpha_{c_ia_j}(h_{a_i^{-1}a_j})\alpha_{c_ja_j}(h_{c_i^{-1}c_j})\right]_{ij}\notag\\
		& = \left[\alpha_{a_j}(h_{a_i^{-1}a_j})\alpha_{c_j}(h_{c_i^{-1}c_j})\right]_{ij}\notag\\
		&\geq \left[h_{a_i^{-1}}\alpha_{c_j}(h_{c_i^{-1}c_j})\alpha_{a_j}(h_{a_j})\right]_{ij}\label{tak}\\
		& = \left[\alpha_{c_i^{-1}}(h_{a_i^{-1}})\alpha_{c_j}(h_{c_i^{-1}c_j})\alpha_{c_ja_j}(h_{a_j})\right]_{ij}\notag\\
		& = \left[K(c_ia_i, c_i) K(c_i,c_j)K(c_j,c_ja_j)\right]_{ij}\notag
		\end{align}
		where \eqref{tak} follows from Schwarz's inequality for positive definite multipliers and Lemma IV.4.24 in \cite{takesaki}.

		\item Now suppose that $\nci (\left\{x_i\right\}_{i=1}^N)^\peq \nci_{v_0} >0$.  Say that $y_i = y_1(i) \cdots y_m(i)$.  If $y_1(i) \neq y_1(j)$, observe that 
		\begin{align}
		&K(y_ic_ia_ib_i,y_jc_ja_jb_j) \notag\\
		&=\alpha_{y_jc_ja_jb_j}(h_{b_i^{-1}a_i^{-1}c_i^{-1}y_i^{-1}y_jc_ja_jb_j})\notag \\
		& =\alpha_{y_jc_ja_jb_j}(h_{b_i^{-1}a_i^{-1}c_i^{-1}y_m(i)^{-1}\cdots y_2(i)^{-1}(y_1(i)^{-1}y_1(j))y_2(j)\cdots y_m(j)c_ja_j b_j})\notag\\
		&= \alpha_{y_jc_ja_jb_j}(\alpha_{b_{j}^{-1}a_j^{-1}c_j^{-1}y_j^{-1}y_ic_i}(h_{b_i^{-1}a_i^{-1}})h_{c_i^{-1}y_i^{-1}y_jc_ja_jb_j})\label{6} \\
		& = \alpha_{y_ic_i}(h_{b_i^{-1}a_i^{-1}})\alpha_{y_jc_ja_jb_j}(\alpha_{b_j^{-1}a_j^{-1}}(h_{c_i^{-1}y_i^{-1}y_jc_j})h_{a_jb_j})\label{6.1}\\
		& = \alpha_{y_ic_i}(h_{b_i^{-1}a_i^{-1}})\alpha_{y_jc_j}(h_{c_i^{-1}y_i^{-1}y_jc_j})\alpha_{y_jc_ja_jb_j}(h_{a_jb_j})\notag\\
		& = K(y_ic_ia_ib_i,y_ic_i)K(y_ic_i,y_jc_j)K(y_jc_j,y_jc_ja_jb_j)\notag
		\end{align}
		where \eqref{6} and \eqref{6.1} follow from Lemma \ref{X1crossterms}.  In case $y_1(i) = y_1(j)$, we note that by Lemma \ref{X1crossterms}, \begin{align*}& K(y_2(i)\cdots y_m(i)c_ia_ib_i,y_2(j)\cdots y_m(j) c_j)\\& = K(y_2(i)\cdots y_m(i)c_ia_ib_i,y_2(i)\cdots y_m(i)c_i) K(y_2(i)\cdots y_m(i)c_i,y_2(j)\cdots y_m(j) c_j).\end{align*} So, since $(\left\{ y_2(i)\cdots y_m(i)c_ia_ib_i\right\}_{i=1}^N)^\peq$ is a strictly smaller complete set, then assumption \eqref{sc} combined with Proposition \ref{schwarz} in conjunction with the $y_1(i)\neq y_1(j)$ case gives the desired inequality.\qedhere
	\end{itemize}
\end{proof}

\begin{proof}[Proof of Theorem \ref{posdef}]  We wish to show that for every complete set $X$ and function $\xi: X \rightarrow \mc{H}$, \eqref{goal} holds. We proceed by induction on $|X|$.

\begin{itemize}
	\item $|X| =1$: Trivial.
	
	\item $|X| \geq 2$: Let $(v_0) \in \mbf{v}_X$. Put \[X_1:=\left\{ x \in X \Big| \nci \left\{x\right\}^\peq \nci_{v_0} = \nci X \nci_{v_0}\right\},\] and let $x_0 \in X_1$ be an element of longest length in $X_1$.  Say that $x_0 = y_0c_0a_0b_0$ is in standard form with respect to $v_0$ (and so $a_0 \in G_{v_0}$).  Define \[Y_1 := \left\{x \in X_1 \big| \text{ in standard form } x = ycab \,(a \in G_{v_0}), \mbf{v}_y = \mbf{v}_{y_0}\right\}.\]  Note the following decomposition.
	\begin{align*}
	&\sum_{x,y \in X} \langle K(x,y)\xi(y) | \xi(x)\rangle \\
	& =\sum_{w,z \in X\setminus Y_1} \langle K(w,z)\xi(z) | \xi(w)\rangle\\
	& + \sum_{x, x' \in Y_1} \langle K(x,x')\xi(x') | \xi(x)\rangle\\
	& + \sum_{x \in Y_1, z \in X \setminus Y_1} 2\mathfrak{Re} \langle K(x,z)\xi(z) | \xi(x)\rangle.
	\end{align*}
	Consider $X \setminus Y_1 \subset (X\setminus Y_1)^\peq$.  By our choice of $x_0$, we have that $x_0 \notin (X\setminus Y_1)^\peq$, so the inductive hypothesis on $|X|$ applies to the strictly smaller complete set $(X\setminus Y_1)^\peq$.  By the discussion above, there is a Hilbert space $\mc{K}$ and operators $V_w \in B(\mc{H},\mc{K})$ for every $w \in X\setminus Y_1$ such that $V_w^*V_z = K(w,z)$ for every $w,z \in X \setminus Y_1$. 
	
	For $x, x' \in Y_1$, let $x = ycab$ and $x' = y'c'a'b'$ be their standard forms with respect to $v_0$.  By Lemma \ref{X1crossterms}, we have that
	\begin{align*}
	&\sum_{x \in Y_1, z \in X \setminus Y_1} 2\mathfrak{Re} \langle K(x,z)\xi(z) | \xi(x)\rangle\\
	&= \sum_{ycab \in Y_1,  z \in X \setminus Y_1} 2\mathfrak{Re} \langle K(ycab,yc)K(yc,z)\xi(z) | \xi(ycab)\rangle\\
	&= \sum_{ycab \in Y_1, z \in X \setminus Y_1} 2\mathfrak{Re} \langle V_z\xi(z) | V_{yc}K(yc,ycab)\xi(ycab)\rangle.
	\end{align*}
	By Lemma \ref{Y1square}, we have that
	\begin{align*}
	&\sum_{x, x' \in Y_1} \langle K(x,x')\xi(x') | \xi(x)\rangle\\
	& \geq \sum_{x=ycab, x'=y'c'a'b' \in Y_1} \langle K(ycab,yc)K(yc,y'c')K(y'c',y'c'a'b')\xi(y'c'a'b') | \xi(ycab)\rangle\\
	&= \sum_{ycab, y'c'a'b' \in Y_1} \langle V_{y'c'}K(y'c',y'c'a'b')\xi(y'c'a'b') | V_{yc}K(yc,ycab)\xi(ycab)\rangle\\
	&= \Big|\Big|\sum_{ycab \in Y_1} V_{yc}K(yc,ycab)\xi(ycab)\Big|\Big|^2.
	\end{align*} 
	We also have 
	\begin{align*}
	\sum_{w,z \in X\setminus Y_1} \langle K(w,z)\xi(z) | \xi(w)\rangle &= \sum_{w,z \in X\setminus Y_1} \langle V_w^*V_z \xi(z) | \xi(w)\rangle\\
	& = \sum_{w,z \in X\setminus Y_1} \langle V_z \xi(z) | V_w\xi(w)\rangle\\
	&= \Big|\Big|\sum_{w \in X\setminus Y_1} V_w \xi(w)\Big|\Big|^2
	\end{align*}
	Thus we have
	\begin{align*}
	&\sum_{x,y \in X} \langle K(x,y)\xi(y) | \xi(x)\rangle \\
	& =\sum_{w,z \in X\setminus Y_1} \langle K(w,z)\xi(z) | \xi(w)\rangle + \sum_{x, x' \in Y_1} \langle K(x,x')\xi(x') | \xi(x)\rangle \\
	&+ \sum_{x \in Y_1, z \in X \setminus Y_1} 2\mathfrak{Re} \langle K(x,z)\xi(z) | \xi(x)\rangle\\
	&\geq \Big|\Big|\sum_{w \in X\setminus Y_1} V_w \xi(w)\Big|\Big|^2 + \Big|\Big|\sum_{x=ycab \in Y_1} V_{yc}K(yc,ycab)\xi(ycab)\Big|\Big|^2 \\
	&+  \sum_{x=ycab \in Y_1, z \in X \setminus Y_1} 2\mathfrak{Re} \langle V_z\xi(z) | V_{yc}K(yc,ycab)\xi(ycab)\rangle\\
	&= \Big|\Big|\sum_{w \in X \setminus Y_1} V_w \xi(w) + \sum_{x=ycab \in Y_1} V_{yc}K(yc,ycab) \xi(ycab)\Big|\Big|^2\\
	&\geq 0. \qedhere
	\end{align*}
\end{itemize}
\end{proof}

\subsection*{Acknowledgment}  The author is grateful to Ben Hayes for a valuable conversation about these results.

\bibliographystyle{plain}
\bibliography{gpdsbib}{}

\end{document}